\numberwithin{equation}{section}
\theoremstyle{plain}
\newtheorem{theorem}[equation]{Theorem}   %[section]
\newtheorem{lemma}[equation]{Lemma}
\newtheorem{proposition}[equation]{Proposition}
\theoremstyle{remark}
\theoremstyle{definition}
\newtheorem{definition}[equation]{Definition}
\newtheorem{remark}[equation]{Remark}
\newtheorem{example}[equation]{Example}
\DeclareMathOperator{\rank}{rank}
\DeclareMathOperator{\im}{Im}
\begin{document}

\renewcommand{\:}{\! :}
\newcommand{\p}{\mathfrak p}
\newcommand{\m}{\mathfrak m}
\newcommand{\e}{\epsilon}
\newcommand{\lra}{\longrightarrow}
\newcommand{\ra}{\rightarrow}
\newcommand{\altref}[1]{{\upshape(\ref{#1})}}
\newcommand{\bfa}{\boldsymbol{a}}
\newcommand{\bfb}{\boldsymbol{b}}
\newcommand{\bfc}{\boldsymbol{c}}
\newcommand{\bfdd}{\boldsymbol{d}}
\newcommand{\bfd}{\boldsymbol{\delta}}
\newcommand{\bfM}{\mathbf M}
\newcommand{\bfI}{\mathbf I}
\newcommand{\bfC}{\mathbf C}
\newcommand{\bfB}{\mathbf B}
\newcommand{\bsfC}{\bold{\mathsf C}}
\newcommand{\bsfT}{\bold{\mathsf T}}
\newcommand{\ol}{\overline}
\newcommand{\twedge}
           {\smash{\overset{\mbox{}_{\circ}}
                           {\wedge}}\thinspace}
\newcommand{\mc}{\mathcal}

\newlength{\wdtha}
\newlength{\wdthb}
\newlength{\wdthc}
\newlength{\wdthd}
%\newcommand{\elabel}{\label}
%\newcommand{\mlabel}{\label}
%\reversemarginpar
\newcommand{\elabel}[1]
           {\label{#1}
            \setlength{\wdtha}{.4\marginparwidth}
            \settowidth{\wdthb}{\tt\small{#1}}
            \addtolength{\wdthb}{\wdtha}
            \raisebox{\baselineskip}
            {\color{red}
             \hspace*{-\wdthb}\tt\small{#1}\hspace{\wdtha}}}

\newcommand{\mlabel}[1]
           {\label{#1}
            \setlength{\wdtha}{\textwidth}
            \setlength{\wdthb}{\wdtha}
            \addtolength{\wdthb}{\marginparsep}
            \addtolength{\wdthb}{\marginparwidth}
            \setlength{\wdthc}{\marginparwidth}
            \setlength{\wdthd}{\marginparsep}
            \addtolength{\wdtha}{2\wdthc}
            \addtolength{\wdtha}{2\marginparsep}
            \setlength{\marginparwidth}{\wdtha}
            \setlength{\marginparsep}{-\wdthb}
            \setlength{\wdtha}{\wdthc}
            \addtolength{\wdtha}{1.1ex}
            \marginpar{\vspace*{-0.3\baselineskip}%\color{blue}
                       \tt\small{#1}\\[-0.4\baselineskip]\rule{\wdtha}{.5pt} 
}
            \setlength{\marginparwidth}{\wdthc}
            \setlength{\marginparsep}{\wdthd}  }

\begin{abstract}

Let $R=\Bbbk [x_1,\ldots ,x_m]$ be a polynomial ring in $m$ 
variables over $\Bbbk$ with the standard $\mathbb{Z}^m$ grading and $L$ a multigraded Noetherian $R$-module.  When $\Bbbk$ is a field, Tchernev has an explicit construction of a multigraded free resolution called the T-resolution of $L$ over $R$.  Despite the explicit canonical description, this method uses linear algebraic methods, which makes the structure hard to understand.  This paper gives a combinatorial description for the free modules, making the  T-resolution clearer.  In doing so, we must introduce an ordering on the elements.  This ordering identifies a canonical generating set for the free modules.  This combinatorial construction additionally allows us to define the free modules over $\mathbb{Z}$ instead of a field.  Moreover, this construction gives a combinatorial description for one component of the differential.  An example is computed in the first section to illustrate this new approach.

\end{abstract}

\title{Free modules of a multigraded resolution from simplicial complexes}
%\title{Free modules of a multigraded resolution }
\author[A. Beecher]{Amanda Beecher}
\address{Department of Mathematical Sciences\\
         United States Military Academy\\
         West Point, NY 10996}
\email{amanda.beecher@usma.edu}
\keywords{}
\subjclass{}
%\date{\today}
%\begin{abstract}
%\end{abstract}

\maketitle

\section*{Introduction}

Let $R=\Bbbk [x_{1},\ldots,x_{m}]$ be the polynomial ring over a field $\Bbbk$ in $m$ variables with the 
standard $\mathbb{Z}^m$ grading and $L$ a finite $\mathbb{Z}^m$ (multigraded) $R$-module.  Let $$E\stackrel{\Phi}{\lra} G\lra L\lra 0$$ be a minimal finite free multigraded presentation of $L$.  In \cite{Tay}, Diana Taylor created a free resolution for every
 monomial ideal $I$ when $L=R/I$ based on combinatorial properties of the monomial generators of $I$.  Since the entries of $\Phi$ for the minimal presentation are just the monomial generators of $I$, one can describe the Taylor resolution completely from the presentation matrix $\Phi$.  Taylor's construction encouraged the natural generalization of this technique when attempting to construct a free resolution of an arbitrary multigraded module. 
 
In \cite{T}, Tchernev constructs a free resolution for any multigraded module, from a minimal free presentation, which he calls the T-resolution.  The main construction and maps, however, are described from the linear algebra inherited from the matrix $\Phi$.  While this approach does provide a natural generalization since, when $L=R/I$ for a monomial ideal $I$ it is the Taylor resolution, it does not provide the clarity of Taylor's construction.  The three main differences between Taylor's resolution for monomial ideals and the T-resolution for multigraded modules are:
\begin{enumerate}
\item\label{Number1} The T-resolution does not give a description of the free modules in terms of the combinatorial properties of the presentation $\Phi$.  
\item\label{Number2} The T-resolution does not give a generating set for the free modules, which leaves the underlying structure of the free modules opaque.  
\item\label{Number3} The T-resolution is only described over a field, as opposed to the Taylor resolution which is defined over $\mathbb{Z}$.
\end{enumerate}

This purpose of this paper is to describe the free modules in the T-resolution from the viewpoint of matroid theory, thereby solving problems \ref{Number1} and \ref{Number2}, albeit at the expense of ordering the columns of $\Phi$.  This new description of the free modules is defined over $\mathbb{Z}$, which suggests an approach to solving problem \ref{Number3}.

The free modules in the T-resolution are indexed by subsets of the column vectors of $\Phi$.  In our first result, Theorem \ref{C:BetaDim}, we show that the rank of each free module is what some matroid theorists call Crapo's beta invariant, defined by those columns of $\Phi$ indexing that free module.  The discovery of this combinatorial number changed problem \ref{Number1} into a search for a combinatorial object whose dimension gives the beta invariant.  The reduced broken circuit complex is such an object because it is a simplicial complex defined by the columns of $\Phi$ whose only non-zero homology occurs in top dimension and is the beta invariant.

The reduced broken circuit complex was first described by Wilf in \cite{Wilf}, but developed by Whitney, Rota and Brylawski in \cite{Whitney}, \cite{Rota} and \cite{Bry}, respectively.  The homology of the reduced broken circuit complex was shown to have a canonical basis by Bj\"{o}rner in \cite{Bj} and in \cite{Zg}, Ziegler describes this basis explicitly by using an indexing set to describe each basic cycle.  In addition, the construction of the reduced broken circuit complex and its homology are not field dependent.  

Our main results begin with Theorem \ref{T:MultiplicityBasis}, which explicitly states a basis for the underlying vector space of the free modules in the T-resolution.  This basis is described using the same indexing set as in Ziegler's description of the homology facets and is combinatorially defined from the presentation of $\Phi$.  As an interesting consequence, Theorem \ref{T:UniformExample} provides an explicit combinatorially constructed basis for a particular symmetric power of the underlying vector space of the image of $\Phi$ when the presentation matrix $\Phi$ has uniform rank.  In this case, the basis is not the standard monomials of appropriate degree of the generators of the image of $\Phi$.  Instead each basis element is the appropriate number of products of distinct linear forms on the generators, where the coefficients of these forms are the coefficients of the column vectors of $\Phi$ in columns $\{3, \ldots, \rank E\}$.

Theorem \ref{C:canonicalIso} is the culminating result of the paper, which proves that the homology of the reduced broken circuit complex is canonically isomorphic to the underlying vector space of the free modules of the T-resolution.  The map sends a basic cycle defined by Ziegler \cite{Zg} to a basis element from Theorem \ref{T:MultiplicityBasis} which has the same indexing set.  

Our final result, Theorem \ref{T:CommDiagram} uses the canonical isomorphism of Theorem \ref{C:canonicalIso} to show that the long exact sequence in homology for the reduced broken circuit complex commutes with the short exact sequence of the underlying vector spaces when removing the last column of $\Phi$.  This commutativity is particularly interesting because the surjective map of the underlying vector spaces gives rise to one component of the differential of the T-resolution.  In particular, we define the reduced broken circuit complex completely combinatorially from $\Phi$, so that its long exact sequence in homology gives rise to one component of the differential.  This component of the differential can be described over $\mathbb{Z}$ and not just a field, which gives support to the possibility of correcting problem \ref{Number3}, perhaps with a different description of the maps and modules.  If we used this construction on every ordering of our elements, we could theoretically find each component of the differentials.  However, each choice of ordering results in a different generating set and to compile this information into one resolution would result in using the linear algebra properties of the presentation matrix as Tchernev did in the original construction \cite{T}.  Thus, it is unclear how much additional information about the differentials can be found topologically.  

We begin Section \ref{BasisExample} with an example that details the new method to describe the free modules of the T-resolution by defining a basis of the underlying vector space.  This example is easy to follow and only requires some basic knowledge of linear algebra.  We introduce matroid terminology for ease of discussion, but it is not necessary in understanding the construction.  This section also highlights the applications of the main theorems of this paper.  Since this approach creates a bridge between commutative algebra and matroid theory, we introduce notation from both disciplines following mainly from \cite{T} and \cite{Zg}.  For ease of reading, we include matroid theoretic results and definitions used in this paper in Section \ref{ReviewOfMatroids}.  Section 3 describes the beta invariant and states many previously known results about this number.  We also show in Section \ref{DimOfTSpaces} our first main result, that the rank of the free modules in the T-resolution is the beta invariant.  The main object of study is the broken circuit complex, which we describe in Section \ref{BrokenCircuitComplex}.  In Section \ref{TSpaces}, we combine all the matroid theoretic properties of the broken circuit complex to prove our remaining four main results.

I would like to thank my thesis advisor Alexandre Tchernev for many helpful discussions.  I would also like to thank Thomas Zaslavsky who recognized that the formula found for the rank of the free module is a formulation of the beta invariant of matroids.  This suggestion was made to my thesis advisor and is the foundation for this paper.

\section{The method to find a basis for a T-space}\label{BasisExample}

The main result of this paper is a new combinatorially defined basis for the T-spaces and consequently a generating set for the free modules of the T-resolution.  This section gives an example that clearly outlines the construction of the basis as well as highlights some of the difficulties in defining the maps from this basis.  

Given a free presentation of a multigraded module $L$

$$E\stackrel{\Phi}{\lra} G\lra L\lra 0,$$

where $$\Phi = \left( \begin{array}{rrrrr}  x^3 & x^2y& xy^2& x^2 &y^3  \\ x^2 & 2xy & 3y^2 &x &0  \\ \end{array} \right).$$

Consider the set of all column vectors of $\Phi$, for shorthand I will simply use the column number rather than the vector.  For instance, the vectors $$\{ \{ x^3, x^2\}, \{xy^2, 3y^2\}\}=\{1,3\}.$$

Find the collection of all minimal dependent sets; i.e., those dependent sets that are not contained in any other dependent set.  These sets are called circuits.  The circuits of $\Phi$ are: $$\{\{ 1,4\}, \{ 1,2,3\}, \{ 1,2,5\}, \{ 1,3,5\}, \{ 2,3,4\}, \{ 2,3,5\}, \{ 2,4,5\}, \{ 3,4,5\}\}$$  

We then create a lattice, later called the lattice of T-flats, by taking all unions of circuits and ordering them by inclusion.  For this example it is:
{\footnotesize
\begin{displaymath}
 \xymatrixcolsep{1pc}
\xymatrix{
 &&& 12345 \ar@{-}[d] \ar@{-}[dr]  \ar@{-}[dl] \ar@{-}[drr] \ar@{-}[dll] & & &\\
& 1234 \ar@{-}[dl]  \ar@{-}[drr] \ar@{-}[drrr]&  1245 \ar@{-}[dll] \ar@{-}[dl] \ar@{-}[drrrr]& 1345  \ar@{-}[dlll] \ar@{-}[dl] \ar@{-}[drrrr]&1235 \ar@{-}[dlll] \ar@{-}[dll] \ar@{-}[dl] \ar@{-}[dr]& 2345\ar@{-}[dl] \ar@{-}[d] \ar@{-}[dr] \ar@{-}[drr]&\\
14& 125  & 135&123&234&235&245&345\\
}
\end{displaymath} 
}

We then remove all members of the lattice that do not contain the smallest element.  Since we have numbered the columns by $\{1,2,3,4,5\}$, we will order the elements $1<2<3<4<5$, although any ordering would work.  However, the remainder of the construction will depend on this ordering, so it must remain fixed throughout.  The poset obtained by removing the subsets that do not contain the element 1 is:
\begin{center}
\begin{displaymath}
\xymatrix{
 && 12345  \ar@{-}[dr]  \ar@{-}[dl] \ar@{-}[drr] \ar@{-}[dll] & & \\
1234 \ar@{-}[d] \ar@{-}[drrrr]&  1245 \ar@{-}[dl] \ar@{-}[d]&& 1345  \ar@{-}[d] \ar@{-}[dlll] &1235 \ar@{-}[d] \ar@{-}[dlll] \ar@{-}[dl] \\
14& 125  && 135&123\\
}
\end{displaymath} 
\end{center}
In order to determine a basis for a subset $A$, say $A=\{1,2,3,4,5\}$, we will find the lexicographically greatest basis defined by the columns of $\Phi$ contained inside the columns indexed by $A$.  In this case, the set $\{4,5\}$ is a basis of the column space of $\Phi$.  So we look at all chains originating in $\{1,2,3,4,5\}$ which remove a subset of $\{4,5\}$, so the edges $\{1,2,3,5\}\lessdot \{1,2,3,4,5\} $ and $\{1,2,3,4\} \lessdot \{1,2,3,4,5\}$ are the only two such chains, since they remove 4 and 5 respectively.  We will then repeat the procedure for the sets $\{1,2,3,5\}$ and $\{1,2,3,4\}$.  The lexicographically greatest basis contained in $\{1,2,3,5\}$ is $\{3,5\}$, so the edges $\{1,2,3\} \lessdot \{1,2,3,5\}$ and $\{1,2,5\} \lessdot \{1,2,3,5\}$ both satisfy the conditions.  For the set $\{1,2,3,4\}$ the lexicographically greatest basis is $\{3,4\}$, so the only chain that satisfies the condition is $ \{1,2,3\} \lessdot \{1,2,3,4\}.$  Notice that the edge $\{1,4\} \lessdot \{1,2,3,4\}$ removes both 2 and 3, and since 2 is not in the basis, this edge does not satisfy the conditions.  The edges satisfying the conditions are labeled on the graph below.

\begin{center}
\begin{displaymath}
\xymatrix{
 && 12345  \ar@{-}[dr]  \ar@{-}[dl] \ar@{-}[drr]|{ \textcolor{red}{ 4 }} \ar@{-}[dll]|{ \textcolor{red}{ 5 } } & & \\
1234 \ar@{-}[d] \ar@{-}[drrrr]|{ \textcolor{red}{ 4 } }&  1245 \ar@{-}[dl] \ar@{-}[d]&& 1345  \ar@{-}[d] \ar@{-}[dlll] &1235 \ar@{.}[d]|{ \textcolor{red}{ 5 } } \ar@{-}[dlll]|{ \textcolor{red}{ 3 } } \ar@{-}[dl] \\
14& 125  && 135&123\\
}
\end{displaymath} 
\end{center}

Beginning at the highest element $\{1,2,3,4,5\}$, we take those labels that appear in descending order and add the minimum element to that set.  Notice that there are three maximal chains with labels, but the one that contains the dotted edge is not decreasing, thus our set only has two elements.  This set is $\{\{1,4,5\}, \{1,3,4\}\}$ and will later be called the $\beta$-nbc basis, given by Ziegler in \cite{Zg} and will index the basis of the T-space associated to $\{1,2,3,4,5\}$.  Since there are two elements, the beta invariant of the matroid given by $\Phi$ is 2.  We reformulate the description given by Ziegler in \cite{Zg} of these sets in order to avoid a certain dualization process.  A more precise statement of this reformulation is given in Remark \ref{R:DualZg}.

For completeness, we have included the $\beta$-nbc basis for each of the T-flats that are not circuits in the following table:

\begin{center}
\begin{tabular}{|l|l|}
\hline

T-flat & $\beta$-nbc basis\\

\hline

$\{1,2,3,4,5\}$ & $\{\{1,4,5\}, \{1,3,4\}\}$\\

\hline

$\{1,2,3,4\}$ & $\{1,4\}$\\

\hline

$\{1,2,4,5\}$ & $\{1,4\}$\\

\hline

$\{1,3,4,5\}$ & $\{1,4\}$\\

\hline

$\{1,2,3,5\}$ & $\{\{1,3\}, \{1,5\}\}$\\

\hline

$\{2,3,4,5\}$ & $\{\{2,4\}, \{2,5\}\}$\\
\hline
\end{tabular}
\end{center}

From this example, the chain that passes through the T-flat $\{1,2,3,4\}$ where only the maximal element is removed from $\{1,2,3,4,5\}$ is $\{1,4,5\}$, and the corresponding $\beta$-nbc basis for $\{1,2,3,4\}$ is the remaining labels from the larger chain $\{1,4\}$.  In general, when we restrict to a smaller T-flat by removing only the largest element, we will find that basis will behave in this predictable way.  This observation illustrates the nice decomposition of maximal chains described in Proposition \ref{P:decreasingchains} into those that pass through the T-flat with the largest element removed and those that do not.   By comparison, if we look at the T-flat $\{1,2,3,5\}$ it also was part of the larger chain for the $\beta$-nbc basis $\{1,3,4\}$, but when we restrict to $\{1,2,3,4\}$ we continue to have two $\beta$-nbc bases with the restricted basis $\{1,3\}$ and the unexpected additional basis given along the dotted edge of $\{1,5\}$.  So not all chains can be nicely described from the larger chains as can be done when we remove the largest element.  In particular, if we consider the T-flat $\{2,3,4,5\}$, the chains do not even belong to this lattice, so there is no clear understanding of how we may describe the $\beta$-nbc basis from the original chains at all.  Understanding how the basis elements transform as we move down the lattice of T-flats is key to understanding the maps of the T-resolution.  This example underscores the difficulty in attempting to duplicate the approach given by removing the largest element to describe all components of the differentials.

Tchernev's construction defines elements of the T-spaces from the chains in the lattice described above.  We show that each chain defined by this process, for example $\{\{1,4,5\}, \{1,3,4\}\}$, index a basis of the underlying vector spaces.  The computation of the specific basis element is shown later in Example \ref{E:TchBasis}.

\section{Review of Matroids}\label{ReviewOfMatroids}
This section introduces basic notation and definitions of matroid theory that will appear throughout this paper.  Matroid theory began as a combination of graph theory and linear algebra, so much of the language will be familiar to those with a background in these disciplines.  We describe the matroid as a simplicial complex, which matroid theorists call the independence complex of a matroid, in order to relate the reduced broken circuit complex, described in Section \ref{BrokenCircuitComplex}, with the original matroid in a natural way.  We emphasize the duality between flats and T-flats, because the construction of Tchernev \cite{T} is explained with T-flats of the matroid given by $\bfM$.  Since most matroid results, including the $\beta$-nbc basis construction by Ziegler \cite{Zg} mentioned in Section 1, are explained in the language of flats, we utilize the duality in order to simplify calculations and maintain the language and notation set forth in Tchernev's original construction \cite{T}.  We also begin a running example in Example \ref{E:UniformMatroid} to illustrate the definitions, but also it creates to an interesting basis for a symmetric power in Theorem \ref{T:UniformExample}.

Given a matrix $\phi$, we define a \emph{matroid} $\bfM$  on a set $S=\{$ the set of column vectors of $\phi\}$ with the independent sets all subsets of $S$ that are linearly independent.  A matroid formed in this way is called \emph{representable} with \emph{representation} $\phi$.   We realize the matroid as a simplicial complex, which matroid theorists call the independence complex, where the vertex set 
$S=\{$ the set of column vectors of $\phi\}$ and the faces are all subsets of $S$ that are 
linearly independent.  In this way, a facet of $\bfM$ is called a \emph{basis} of $\bfM$, since it is a maximal 
linearly independent subset.  
For each $A\subseteq S$, $\dim A +1$ is the \emph{rank} of $A$, denoted $r_{\bfM}(A)=r(A)$.  We call $r(S)$ the rank of the matroid.  Notice it is 
also the rank of the matrix $\phi$.  The \emph{level} of a subset is the number $l(A)=|A|-r(A)-1=|A|-\dim A$.  A minimal non-face (minimal linearly dependent set) 
is called a \emph{circuit} of $\bfM$.  A circuit $A$ such that $|A|=1$ is called a \emph{loop}, which is a missing vertex given my the zero vector.  A matroid $\bfM$ is called \emph{connected} if for every pair of distinct element $x$ and $y$ in $S$ there is a circuit of $\bfM$ containing $x$ and $y$.  Otherwise it is \emph{disconnected}.  If a matroid $\bfM$ contains a loop and $|S|\geq 2$, then it is disconnected.  Indeed, since a loop $\{x\}$ is a circuit, then any circuit containing $y\neq x$ will not contain $x$ by minimality.  In \cite{T}, Tchernev calls a non-empty union of circuits a 
\emph{T-flat} of $\bfM$.  We will denote the collection of all T-flats by $\mc{T}(\bfM)$.  
A subset $Y\subseteq S$ is a \emph{flat} of $\bfM$ if 
for every $x\in S\backslash Y$, $r(Y\cup \{x\})=r(Y)+1$.  The \emph{closure} of $A$, $\overline{A}$, is the unique flat that contains $A$ and has the same rank 
as $A$.  For a representable matroid $\bfM$, $\ol{A}$ is the subset of $S$ which contains all elements in the span of the vectors of $A$ and corresponds to the closure operation in simplicial topology.
  The \emph{dual matroid} $\bfM^*$ of $\bfM$ has vertex set $S$ and the facets of $\bfM ^*$ are all subsets $S-F$ 
where $F$ is a facet of $\bfM$.  We call a loop in $\bfM^*$ a coloop of $\bfM$ and a flat of $\bfM^*$ a dual flat of $\bfM$.  The rank in the dual matroid is the number $r_{\bfM^*}(A)=|A|-r{\bfM}(S)+r{\bfM}(S\backslash A)$.  A set $A\subseteq S$ is a T-flat of a matroid $\bfM$ if and only if $S\backslash
A$ is a proper flat of the dual matroid $\bfM ^*$ (\cite{T}, Definition 2.1.1 and Remark 2.1.2).  

The \emph{lattice of flats} of a matroid $\bfM$ is a ranked lattice orded by inclusion and stratified by the rank 
of the flat in the dual matroid.  So, the lattice of dual flats is ranked by its rank in the dual matroid.  Thus, taking complements of entries in the lattice of dual flats creates a lattice of 
T-flats in the matroid ordered by reverse inclusion and ranked by $$r_{\bfM 
^*}(S\backslash A)=|S|-r_{\bfM}(S)-|A|+r_{\bfM}(A)=l(S)-l(A),$$ for $A$ a T-flat.  This duality allows us to translate the work of matroid theorists on flats to the work of Tchernev \cite{T} which uses T-flats.  The benefit to complementing dual flats into T-flats is to provide a more direct computation of the basis, as shown in Section 1.  Additionally, the T-flats index the multigrading for the free modules in the original construction, so homogenization is more natural from this perspective.

There are two matroid operations that we will use in our induction arguments.  First, the \emph{restriction} of a matroid $\bfM$ to $A$, $\bfM|A$ is defined to be the subcomplex $$\Delta_A
=\{F\in \bfM : F\subseteq A\}.$$  In particular, $\bfM|(S- a)=\bfM\backslash a$ is the subcomplex of $\bfM$ by removing $a$.  Last, the \emph{contraction} of a matroid $\bfM$ on $A$, $\bfM.A$ is the subcomplex $$\Delta_{\bfM.A} =
\{F\subseteq (S-A) : F\cup B \in \bfM, \mbox{ for some } B\subseteq A\}.$$  In particular, $\bfM.(S-a)=\bfM/a=\{F\subseteq S-a: F\cup \{a\} \in \bfM\}$
 is the subcomplex link$(a)$.

\begin{example}\label{E:UniformMatroid}
The matroid $U_{r,n}$ is the uniform matroid on $S$ where $|S|=n$ and $r(\bfM)=r$.  It has the property that every 
r-element subset of $S$ is a basis for $U_{r,n}$.  Therefore, every $r+1$ element subset of $S$ is a circuit of $U_{r,n}$.  
The dual matroid $U_{r,n}^*=U_{n-r, n}$.
\end{example}

If the matroid $\bfM$ is a representable matroid with representation given by\\ $\phi :U\to W$ then $U=U_S$ 
and $U_I$ is the subspaces spanned by $\{e_i\mid i\in I\subseteq S \}$.  The vector space $V_I$ is the subspace of 
$W$ spanned by $\{\phi(e_i)\mid i\in I\}$ and $V=V_S$.  

For a matroid $M$, with $|S|=n$, we can define a linear ordering on the 
vertices by assigning to each vertex an element of the set $S=\{1,2,\ldots , n\}$
and $1<2<\ldots <n.$  We call a matroid with a linear ordering an \emph{ordered 
matroid}.  Many of the constructions we discuss will depend on the ordering we choose.

\section{Dimension of T-spaces}\label{DimOfTSpaces}
Tchernev showed that the rank of the free modules is an invariant of the matroid $\bfM$ from the presentation $\Phi$ \cite[Theorem 3.6]{T}, but does not identify which invariant it is.  Here, Theorem \ref{C:BetaDim} shows that the rank of the free module $T_A (\Phi, S)$ is the beta invariant from the matroid $\bfM|A$.  

The structure of the T-resolution is essentially described in the underlying vector space complex, because there is a canonical procedure to find the T-resolution from the underlying vector space.  The T-resolution is a tensor product between the vector space complex and the ring $R$ by assigning appropriate degrees to each space as outlined in \cite[Definition 4.2]{T}.  Thus the structure of the free resolution lies in the underlying vector space, just like it does for monomial ideals as shown in \cite{Peeva}.  The underlying vector space is obtained from a tensor product of the T-resolution with the field $\Bbbk$, where we consider $\Bbbk$ as the image of sending each variable to the element $1 \in \Bbbk$.  This process essentially removes the grading, so that we can understand the structure of the vector spaces, called \textit{T-spaces} and the maps between them.  The T-space associated with the subset $A$ of columns of $\Phi$ will be denoted $T_A(\phi)$, where $\phi$ is the coefficient matrix of $\Phi$.  Since the free modules $T_A(\Phi, S)=R\otimes_{\Bbbk}T_A(\phi)$ in \cite[Definition 4.2]{T}, then $\dim T_A(\phi)=\rank T_A(\Phi, S)$.

Before we define the beta invariant, we define the \emph{M\"{o}bius function} $\mu$ on a poset $P$ as a map $\mu :P \times P 
\rightarrow \mathbb{Z}$ so that $$\mu (X,X)=1,$$ $$\sum_{X\leq Z\leq Y} 
\mu (X,Z)=0 \mbox{ if } X<Y,$$ and $$\mu (X,Z)=0 \mbox{ otherwise.}$$

\begin{remark}
The M\"{o}bius function for the lattice of T-flats ordered by reverse inclusion is precisely the same as 
the M\"{o}bius function for the lattice of dual flats ordered by inclusion, since we are simply taking complements.
\end{remark}

\begin{definition}\cite{C}
The beta invariant is 
defined as $$\beta (\bfM)=(-1)^{r_{\bfM}(S)} \sum_{x\in \mathfrak{L}(\bfM)} \mu (0,x) 
r_{\bfM}(x),$$ where $\mathfrak{L}(\bfM)$ is the lattice of flats for the matroid 
$\bfM$.
\end{definition}
\begin{lemma}\label{T:DimTspace}
The dimension of a T-space of a T-flat $A$ is given by 
$$ \dim(T_{A})=(-1)^{l(A)} \sum_{B\subseteq A} \mu (A,B) (l(B)+1)$$ where 
$\mu (A,B)$ is the M\"{o}bius function for the partially ordered set of 
connected T-flats for the matroid $\bfM$ under reverse inclusion.
\end{lemma}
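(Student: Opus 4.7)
The plan is to prove the formula by Möbius inversion on the poset of connected T-flats contained in $A$, ordered by reverse inclusion. The strategy rests on first extracting from Tchernev's original construction \cite{T} a cumulative identity expressing the total ``nullity-like'' dimension $l(A)+1=|A|-r(A)$ as a sum of the dimensions $\dim T_B$ over connected T-flats $B\subseteq A$, and then inverting this relation. Concretely, I would first establish
\[
\sum_{\substack{B\subseteq A\\ B\in\mathcal T(\bfM),\,\text{connected}}} \dim T_B \;=\; l(A)+1.
\]
This should follow by identifying $T_A$, in Tchernev's setup, with the canonical summand of an ambient vector space of dimension $|A|-r(A)$ (essentially $\ker(\phi|_{U_A})$ or a Koszul-style quotient thereof) that is indexed by the connected T-flat $A$ itself. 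Disconnected T-flats $A$ split as disjoint unions $A=A_1\sqcup\cdots\sqcup A_k$ of connected T-flats, and the kernel decomposes accordingly into the summands supported on the $A_i$; induction on the number of components then collapses the contribution of each disconnected subset into contributions of its connected sub-T-flats, leaving only the connected ones in the cumulative sum.

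Once the summation identity is in hand, the lemma is a direct application of Möbius inversion. Setting $f(A)=l(A)+1$ and $g(A)=\dim T_A$, the identity reads $f(A)=\sum_{B\subseteq A} g(B)$ in the poset of connected T-flats ordered by reverse inclusion; inversion yields
\[
\dim T_A \;=\; \sum_{B\subseteq A} \mu(A,B)\,(l(B)+1).
\]
The factor $(-1)^{l(A)}$ in the stated formula is a sign convention reflecting the rank function of the lattice of T-flats under reverse inclusion, which, as noted in the discussion preceding the lemma, is $l(S)-l(\cdot)$; the Möbius values in such a ranked lattice carry a natural sign depending on $l(A)-l(B)$, and pulling out the uniform factor $(-1)^{l(A)}$ produces the expression as stated (the residual $(-1)^{l(B)}$ absorbed into a positive expression of $\mu$).

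The main obstacle is the first step: verifying the cumulative identity $\sum_{B\subseteq A}\dim T_B=l(A)+1$ in a form clean enough to invert. Tchernev defines the T-spaces iteratively by peeling off contributions indexed by lower T-flats \cite[Definition~4.2 and surrounding]{T}, so the identity should essentially be a repackaging of his definition, but one must take care to (a) handle the disconnected case correctly so that only connected T-flats appear on both sides of the inversion, and (b) confirm that the poset used in the inversion really is the poset of connected T-flats (not all T-flats), so that the $\mu$ in the final formula is the one stated. Once those bookkeeping issues are resolved, the lemma is formal; the genuine content lies in the compatibility between Tchernev's iterative definition and the Möbius structure of the T-flat lattice.
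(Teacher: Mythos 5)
The proposal identifies the right overall mechanism (Möbius inversion over the poset of connected T-flats in $[\emptyset,A]$), and the Möbius-inversion step is indeed formal. But the starting identity you propose is wrong, and the gap is not a bookkeeping issue — it reflects a misunderstanding of how the T-spaces sit relative to $\ker(\phi|_A)$.

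You posit
$\sum_{B\subseteq A}\dim T_B = l(A)+1$
on the grounds that $\ker(\phi|_{U_A})$, which has dimension $|A|-r(A)=l(A)+1$, decomposes as a direct sum of ``canonical summands'' $T_B$. That is not what Tchernev's construction gives. By \cite[Theorem 5.4.1]{T}, restricting the T-complex to $A$ yields a \emph{resolution} of $\ker(\phi|_A)$, with the T-space $T_B(\phi)$ sitting in homological degree $l(B)$. What one therefore obtains is the Euler-characteristic identity
\[
l(A)+1 \;=\; \sum_{B\subseteq A}(-1)^{l(B)}\dim T_B(\phi),
\]
with alternating signs that your version omits. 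These signs are essential: without them the identity is simply false (one can already see this in small examples where some $T_B$ with $l(B)$ odd is nonzero), and after inversion you would get $\dim T_A=\sum_{B\subseteq A}\mu(A,B)(l(B)+1)$, which differs from the lemma's statement by the factor $(-1)^{l(A)}$ and cannot be reconciled with it since $l(A)$ need not be even. Your attempt to rescue this by saying the $(-1)^{l(A)}$ is ``a sign convention'' that can be ``absorbed into a positive expression of $\mu$'' does not work; the Möbius function of the poset is what it is, and inverting the unsigned identity produces the unsigned formula, full stop. Once you start from the correctly signed Euler-characteristic identity, Möbius inversion applied to $g(B)=(-1)^{l(B)}\dim T_B(\phi)$ directly yields $(-1)^{l(A)}\dim T_A(\phi)=\sum_{B\subseteq A}\mu(A,B)(l(B)+1)$, which is exactly the lemma and is exactly the paper's argument.
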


\begin{proof}
The T-complex is a resolution for $\ker \phi$.  By \cite[Theorem 5.4.1]{T}, restricting the resolution
 of the $\ker \phi $ to $A$ the 
result will be a resolution of $\ker (\phi |A)$.  This new resolution 
will end at $T_{A}(\phi)$ with all remaining T-spaces of the form $T_{B}(\phi)$ 
where $B\subseteq A$, and $$0=|A|-r^{\bfM 
}_{A}-\sum_{B\subseteq A}(-1)^{l(B)} \dim(T_{B}(\phi)).$$  We can simply rewrite 
this as $$l(A)+1=\sum_{B\subseteq A} (-1)^{l(B)} \dim(T_{B}(\phi)).$$  
Using M\"{o}bius inversion we obtain $$(-1)^{l(A)} \dim T_A(\phi) = \sum_{B\subseteq A} \mu (A,B) (l(B)+1)$$ and our equation is verified.
\end{proof}

The following lemma shows that the dimension of the T-spaces obtained in the previous lemma is equal to the beta invariant.

\begin{lemma}\label{T:BetaDim}
Given a representable matroid $\bfM$ on a finite set $S$ with representation 
$\phi,$ $$\dim T_{S}(\phi )=\beta (\bfM ^*).$$
\end{lemma}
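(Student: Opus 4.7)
The plan is to rewrite the formula of Lemma \ref{T:DimTspace} with $A=S$ in the language of the dual matroid via the order-reversing bijection $B\mapsto S\setminus B$ between T-flats of $\bfM$ and proper flats of $\bfM^*$, and then identify the result with the defining sum for $\beta(\bfM^*)$. All the ingredients are already present in the excerpt: the bijection itself (stated in Section~2), the numerical identity $r_{\bfM^*}(S\setminus A)=l(S)-l(A)$, and the Möbius-function compatibility that follows because complementation reverses the order.

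Concretely, I first specialize Lemma \ref{T:DimTspace} to $A=S$ to obtain
$$\dim T_S(\phi)=(-1)^{l(S)}\sum_{B}\mu(S,B)\bigl(l(B)+1\bigr),$$
and change variables to $y=S\setminus B$. The displayed rank identity gives $l(B)+1=(l(S)+1)-r_{\bfM^*}(y)$, while the order-reversing bijection transports Möbius values: $\mu(S,B)=\mu_{\mathfrak{L}(\bfM^*)}(\hat 0,y)$. Substituting yields
$$\dim T_S(\phi)=(-1)^{l(S)}\sum_{y\in\mathfrak{L}(\bfM^*)}\mu(\hat 0,y)\bigl((l(S)+1)-r_{\bfM^*}(y)\bigr).$$
Splitting this into two sums, the constant-weight piece factors as $(l(S)+1)\sum_{y}\mu(\hat 0,y)$, which vanishes by the standard identity $\sum_{y\in\mathfrak{L}(\bfM^*)}\mu(\hat 0,y)=0$ (valid whenever $\hat 0\neq\hat 1$; the degenerate case is checked directly). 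Using $r_{\bfM^*}(S)=|S|-r_{\bfM}(S)=l(S)+1$, the surviving sign $(-1)^{l(S)+1}$ becomes $(-1)^{r_{\bfM^*}(S)}$, and what is left is exactly
$$(-1)^{r_{\bfM^*}(S)}\sum_{y\in\mathfrak{L}(\bfM^*)}\mu(\hat 0,y)\,r_{\bfM^*}(y)=\beta(\bfM^*).$$

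The main obstacle is a bookkeeping issue rather than a conceptual one: Lemma \ref{T:DimTspace} indexes its sum over \emph{connected} T-flats (these being the only ones that appear in the T-resolution), whereas $\beta(\bfM^*)$ is defined as a sum over the entire lattice of flats. I would close this gap by appealing to Crapo's classical fact that $\mu(\hat 0,y)=0$ whenever the restriction of the matroid to a flat $y$ is disconnected, so disconnected flats contribute zero to the dual sum and the two index sets can be harmlessly enlarged to agree under the complementation bijection. With that reconciliation in hand, the chain of equalities above is an equality on the nose and the lemma follows.
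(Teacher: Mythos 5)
Your calculation is sound and is in fact the paper's own argument run in reverse: the paper starts from the defining sum for $\beta(\bfM^*)$, uses $r_{\bfM^*}(x)=(l(S)+1)-(l(S\backslash x)+1)$, drops the piece proportional to $\sum_x\mu(\emptyset,x)=0$, and then changes variables $x\mapsto B=S\backslash x$ to land on the formula of Lemma~\ref{T:DimTspace}; you start from Lemma~\ref{T:DimTspace} and unwind to $\beta(\bfM^*)$. Same substitution, same two sums, same vanishing piece. The key identifications you invoke --- the rank/level identity and the fact that complementation transports Möbius values between $\mc{T}(\bfM)$ (reverse inclusion) and $\mathfrak L(\bfM^*)$ (inclusion) --- are exactly the ones the paper records in Section~2 and the Remark preceding the definition of $\beta$.

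The one place you go wrong is the patch you propose for the index-set discrepancy. It is \emph{not} true that $\mu(\hat 0,y)=0$ when $\bfM^*|y$ is disconnected: $\mathfrak L(\bfM^*)$ is a geometric lattice, and by Rota's theorem $(-1)^{\rho(y)}\mu(\hat 0,y)>0$ for \emph{every} flat $y$ (indeed $|\mu(\hat 0,y)|$ counts nbc-bases of $\bfM^*|y$, which is always at least one). What Crapo's connectivity criterion says is that $\beta$ vanishes on disconnected matroids, which is a different quantity. The correct way to reconcile the index sets is not to discard terms but to observe that the Möbius inversion in Lemma~\ref{T:DimTspace} is legitimately performed over the \emph{full} lattice of T-flats, because Tchernev's restriction theorem \cite[Theorem 5.4.1]{T} applies to every T-flat $A$ (connected or not), so the Euler-characteristic identity $l(A)+1=\sum_{B\subseteq A}(-1)^{l(B)}\dim T_B(\phi)$ holds for all T-flats $A$, with $\dim T_A(\phi)=0$ for disconnected ones. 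The word ``connected'' in the statement of Lemma~\ref{T:DimTspace} is a slip; once the inversion is read over all of $\mc{T}(\bfM)$, that lattice is identified with $\mathfrak L(\bfM^*)$ under complementation (the $x=S$ term contributes $0$ because $l(\emptyset)+1=0$), and your chain of equalities closes with no residual gap.
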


\begin{proof}
$\displaystyle \beta (\bfM ^*)=(-1)^{r_{\bfM ^*}(S)} \sum_{x\in \mathfrak{L}(\bfM ^*)} 
\mu (\emptyset,x) r_{\bfM ^*}(x)$, where $\mathfrak{L}(\bfM ^*)$ is the lattice of dual flats of $\bfM$ or in other words the lattice of flats in the dual matroid.  This yields\\
{\small\[ 
\begin{array}{ll}
\beta (\bfM^*) & \displaystyle
=(-1)^{l(S)+1}\sum_{x\in \mathfrak{L}(\bfM ^*)} \mu (\emptyset,x) \left[(l(S)+1)-(l(S\backslash x)+1)\right]\\
& \displaystyle =(-1)^{l(S) +1} (l(S)+1) \sum_{x\in \mathfrak{L}(\bfM ^*)} \mu (\emptyset,x)+(-1)^{l(S)} \sum_{x\in \mathfrak{L}(\bfM ^*)} \mu (\emptyset,x) (l(S\backslash x)+1)\\
& \displaystyle =(-1)^{l(S)} \sum_{x\in \mathfrak{L}(\bfM ^*)} \mu (\emptyset,x) (l(S\backslash 
x)+1)\\
& \displaystyle =(-1)^{l(S)} \sum_{B \in \mc{T}( \bfM)} \mu (S,B) 
(l(B)+1)\\
& \displaystyle = \dim T_S(\phi) \mbox{ by Lemma \ref{T:DimTspace}}.
\end{array}
\] }
\end{proof}

\begin{example}\label{uniform}
If $\bfM=U_{r,n}$, then $\dim T_S(\phi)=\binom{n-2}{r-1}=\beta (U_{r,n})=\beta (U_{r,n}^*)$ where the first equality follows by \cite[Example 2.2.10]{T}, the second is \cite[Proposition 7]{C}, and the third is \cite[Theorem IV]{C}.
\end{example}

\begin{theorem}\label{C:BetaDim}
Given a representable matroid $\bfM$ on a finite set $S$ with representation 
$\phi$ and a T-flat $A$ of $\bfM$, then $\dim T_{A}(\phi )=\beta ((\bfM |A)^*)$.
\end{theorem}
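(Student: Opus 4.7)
The plan is to reduce the general statement to the case $A = S$ already handled by Lemma \ref{T:BetaDim}, by passing to the restricted matroid $\bfM|A$ with representation $\phi|A$ (the submatrix of $\phi$ whose columns are indexed by $A$). Since $\bfM|A$ has ground set $A$ and is representable by $\phi|A$, Lemma \ref{T:BetaDim} applied to it immediately yields $\dim T_A(\phi|A) = \beta((\bfM|A)^*)$, so it suffices to establish that $\dim T_A(\phi) = \dim T_A(\phi|A)$.

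For this identification I would appeal to the formula in Lemma \ref{T:DimTspace}, which expresses $\dim T_A(\cdot)$ as a M\"obius-weighted alternating sum over connected T-flats $B \subseteq A$. The key observation is that every combinatorial ingredient of that formula is intrinsic to the ambient set $A$. Concretely, a circuit of $\bfM|A$ is exactly a circuit of $\bfM$ contained in $A$, so the connected T-flats of $\bfM|A$ coincide with those connected T-flats of $\bfM$ contained in $A$; for any such $B$ the rank and level agree under $\bfM$ and $\bfM|A$; and since the M\"obius function is determined entirely by poset structure, $\mu_{\bfM}(A, B) = \mu_{\bfM|A}(A, B)$ for all T-flats $B \subseteq A$. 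Applying Lemma \ref{T:DimTspace} to $(\bfM, A)$ and to $(\bfM|A, A)$ therefore produces identical sums, whence $\dim T_A(\phi) = \dim T_A(\phi|A)$. Chaining this equality with Lemma \ref{T:BetaDim} applied to $\bfM|A$ delivers the desired formula.

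The main potential obstacle is the bookkeeping verification that restriction preserves the relevant combinatorial data below $A$ — in particular, that passing from $\bfM$ to $\bfM|A$ neither creates nor destroys connected T-flats inside $A$, and that the M\"obius function of the lattice of T-flats is unaffected by ignoring T-flats not contained in $A$. Both points, however, reduce to the elementary characterization of circuits under matroid restriction together with the purely poset-theoretic definition of $\mu$, so they should be straightforward to dispatch. Beyond Lemmas \ref{T:DimTspace} and \ref{T:BetaDim}, no new machinery is needed.
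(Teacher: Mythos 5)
Your proposal is correct, and it differs from the paper's proof only in the mechanism used to identify $\dim T_A(\phi)$ with $\dim T_A(\phi|A)$. The paper simply cites \cite[Theorem 5.4.1]{T}, which gives the stronger statement $T_A(\phi)=T_A(\phi|A)$ as an identity of vector spaces, and then applies Lemma \ref{T:BetaDim} to $\bfM|A$. You instead derive only the dimension equality by invoking Lemma \ref{T:DimTspace} twice and arguing that the M\"obius-weighted sum is intrinsic to the interval of T-flats below $A$: circuits of $\bfM|A$ are circuits of $\bfM$ inside $A$, so T-flats, ranks, levels, and interval M\"obius values all agree. That argument is sound; $\mu(A,B)$ depends only on the interval $[A,B]$ in the reverse-inclusion poset, and that interval is unchanged under restriction. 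The trade-off is that your route is more explicit about \emph{why} nothing changes when passing to $\bfM|A$, at the cost of being longer; note though that Lemma \ref{T:DimTspace}'s own proof already uses \cite[Theorem 5.4.1]{T}, so you are not avoiding that dependency, only repackaging it. Either way, chaining with Lemma \ref{T:BetaDim} applied to $(\bfM|A,\phi|A)$ closes the argument.
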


\begin{proof}
This corollary is immediate from Theorem \ref{T:BetaDim} since $T_A(\phi)=T_A(\phi|A)$ \cite[Theorem 5.4.1]{T} where $\phi |A$ is the submatrix of $\phi$ whose 
columns are indexed by $A$.  So for each $A$, we replace $\bfM$ by $\bfM |A$ and obtain our result.
\end{proof}

We end this section with properties of the beta invariant given in \cite{C}.
\begin{enumerate}
\item The number $\beta (\bfM) \geq 0$ for every matroid $\bfM$.
\item  $\beta (\bfM)=0$ precisely when $\bfM$ is disconnected.
\item For any matroid $\bfM$ with dual $\bfM ^*, \ \beta (\bfM)=\beta (\bfM 
^*)$
\item If $a$ is not a loop or a coloop, then $\beta (M)=\beta (M/a)+\beta 
(M\backslash a).$
\end{enumerate}
These properties will be used later to prove the main theorems.

\section{Broken Circuit Complex}\label{BrokenCircuitComplex}
This section sets the foundation for the key object of study, the broken circuit complex.  We begin with a review of the key properties of this complex, focusing on the construction of Ziegler \cite{Zg} of the $\beta$-nbc basis, which indexes the basic cycles of the reduced homology of the broken circuit complex.  In the next section, we show that the $\beta$-nbc basis also indexes a basis of a T-space, thereby defining the canonical isomorphism of Theorem \ref{C:canonicalIso}.  We also use the $\beta$-nbc basis to show that the broken circuit complex mimics the decomposition under restriction and contraction inherent in the T-spaces in Proposition \ref{P:decreasingchains}.  This allows us to realize the maps of the long exact sequence of homology in Lemma \ref{L:HomologyMaps} that will be shown to give one component of the differential in our final theorem, Theorem \ref{T:CommDiagram}.

In an ordered matroid a \emph{broken circuit} is obtained from a circuit by deleting its smallest 
element, here we will call it 1.  The family of all subsets of $S$ that contain no broken circuits is called 
the \emph{broken circuit complex} of $\bfM$, written $BC(\bfM)$ and the family of all subsets of $S\backslash 1$ that contain no broken circuits is called
the \emph{reduced broken circuit complex} of $\bfM$, written $\ol{BC}(\bfM)$.  

\begin{example}
The circuits of $U_{r,n}$ are all $r+1$ element subsets.  The broken circuits of $U_{r,n}$ are all $r$ element 
subsets except those that contain $1$.  Thus $BC(U_{r,n})$ is a complex with faces all $r$ simplices that contain 1 and $\ol{BC}(U_{r,n})$ is a complex whose faces are all $r-1$ simplicies of $\{2,\ldots n\}$, which is isomorphic to the simplicial complex defined by $U_{r-1, n-1}$. 
\end{example}

In \cite{Bry}, it is shown that for $\bfM$ an ordered matroid of rank $r$ that 
\begin{enumerate}
\item
$\ol{BC}(\bfM)\subseteq BC(\bfM)\subseteq \bfM$.
\item
$BC(\bfM)$ is a pure (r-1)-dimensional complex of $\bfM$.
\item
$BC(\bfM)$ is a cone over $\ol{BC}(\bfM)$ with apex 1, whose facets we call nbc-basis.
\item
$\ol{BC}(\bfM)$ is a pure (r-2)-dimensional subcomplex of $\bfM$.
\end{enumerate}

 In \cite[Theorem 7.8.2 and 7.7.2]{Bj}, Bj\"{o}rner shows that 
the only nonzero homology of reduced broken circuit complex occurs in the top dimension and establishes the existence of a canonical set of basic cycles indexed by certain facets of the simplicial complex.  These facets are called homology facets and depend on the ordering of the vertices.  

In \cite[Theorem 1.4 and 2.4]{Zg}, Ziegler describes the homology facets for the reduced broken circuit complex, denoted $\beta \mbox{nbc}(\bfM)$ in two ways.  First, $B\in \beta \mbox{nbc}(\bfM)$ if and only if
 $B$ is a facet of $BC(\bfM)$ and has no non-trivial internal activity.  That is to say that, for each $b\in B, \ b\neq \min (c^*(B,b))$ where $c^*(B,b)$ is the unique
 circuit in the dual matroid $\bfM^*$ containing $b$ and contained in $(S\backslash B)\cup \{b\}$ \cite[Definition 1.3]{Zg}.
Also, it is shown that
 $B\in \beta \mbox{nbc}(\bfM)$ if and only if a decreasing chain of a certain E-L shelling whose second entry is $b_i\in B$ 
for all covers in the chain of flats.  In other words, each $b_i=\min (G\backslash F)$ for a cover $F\lessdot G$ where $G$ does not intersect the lexicographically least basis of $\bfM/(F\cup \{1\})$ \cite[Theorem 1.4 and 2.4]{Zg}.

The description of the $\beta$-nbc basis given in Section 1 is the dualized version of Ziegler's description given here.  The construction explained above is given using the lattice of flats of a matroid, whereas Tchernev's construction \cite{T} is given in terms of the lattice of T-flats.  In Section 2, we observed that the lattice of T-flats of a matroid is the complement of the lattice of dual flats.  The following remark explains the translation of Ziegler's lattice of dual flats into information about the lattice of T-flats.
\begin{remark} \label{R:DualZg} \hfill

\begin{enumerate}
\item
The set $B\in \beta \mbox{nbc} (\bfM^*)$ if and only if $S\backslash B$ is a basis of $\bfM$ not containing $1$ so that for each $b\in B$, $b$ is not the minimal element of the circuit contained in the dependent set $(S\backslash B)\cup \{b\}$ of $\bfM$.
\item
A flat $F$ is an element of the reduced lattice of dual flats by removing all elements that do not contain the smallest element $1$ if and only if $S\backslash F$ is a T-flat that contains the element $1$.
\item
A cover $F\lessdot G$ in the reduced lattice of dual flats of the dual matroid if and only if $S\backslash G \lessdot S\backslash F$ is a cover in the lattice of T-flats.
\item
The lexicographically least basis of $\bfM^*$ inside $\bfM^*/(F\cup \{1\})=\bfM \backslash (F\cup \{1\})$ is the complement to the lexicographically greatest basis of $\bfM$ inside $S\backslash (F\cup \{1\})$.  
\item
The lexicographically least basis of $\bfM^*$ inside $\bfM^*/(F\cup \{1\})=\bfM \backslash (F\cup \{1\})$ is inside the T-flat $S\backslash F$.
\item
Since $G$ does not intersect $\bfM^*/(F\cup \{1\})\subseteq S\backslash F$, it is enough to say that $(G\backslash F)$ does not intersect $\bfM^*/(F\cup \{1\})$.  This is equivalent to saying that $(G\backslash F)$ is contained in the complement of this set, which is the lexicographically greatest basis of $\bfM$ inside $S\backslash (F\cup \{1\})$.
\end{enumerate}
\end{remark}

\begin{example}
 For the uniform matroid $U_{r,n}$, we have $$\beta \mbox{nbc}(U_{r,n}^*)=\{X\subseteq S\mid 
|X|=n-r, 1\in X \mbox{ but } 2\notin X\}$$. 
\end{example}

We use the properties of the beta invariant stated earlier with the reinterpretation of Ziegler's construction to show how the chains representing $\beta \mbox{nbc}(\bfM)$ decompose under restriction and contraction.

\begin{proposition} \label{P:decreasingchains}
Consider a linear ordering on $S$ of a connected matroid $\bfM$ and let $a=\max(S)$.
If $S\backslash a$ is not a T-flat, then $\bfM /a$ has the same lattice of T-flats as $\bfM$, so the decreasing chains remain the same.  If $S\backslash a$ is a T-flat of $\bfM$ then the decreasing chains of $\bfM \backslash a$ are precisely those chains of $\bfM$ that passed through $S\backslash a$ with $\{a\}$ removed.  Also, if $S\backslash a$ is a T-flat of $\bfM$ then the decreasing chains of $\bfM / a$ are precisely those chains of $\bfM$ that did not pass through $S \backslash a$.
\end{proposition}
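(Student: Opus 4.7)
The plan rests on one uniform observation. Each cover $G\lessdot F$ of the reduced lattice is labelled $\min(G\setminus F)$, which equals $a=\max(S)$ precisely when $G\setminus F=\{a\}$, i.e., when $F=G\setminus\{a\}$ is itself a T-flat. Because labels along a decreasing chain are strictly decreasing and $a$ is the largest element of $S$, the label $a$ can appear only on the first edge of a chain, and such an edge exists if and only if $S\setminus a$ is a T-flat. Consequently every decreasing chain of $\bfM$ either begins with $S\lessdot S\setminus a$ (``passes through $S\setminus a$'', after which every visited T-flat lies inside $S\setminus a$) or never uses label $a$ at all (in which case $a$ is never removed and every visited T-flat contains $a$). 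This dichotomy will be the combinatorial skeleton of the proof. To finish, I will construct injective transport maps between the relevant sets of decreasing chains and then invoke the identity $\beta(\bfM)=\beta(\bfM\setminus a)+\beta(\bfM/a)$ together with $|\beta\text{-nbc}(\cdot)|=\beta(\cdot)$ to upgrade injectivity to bijectivity.

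For Case~1 ($S\setminus a$ not a T-flat) the first alternative is vacuous, and some element $x\in S\setminus a$ lies in no circuit of $\bfM$ avoiding $a$, making $x$ a coloop of $\bfM\setminus a$ and forcing $\bfM\setminus a$ to be disconnected with $\beta(\bfM\setminus a)=0$ (the degenerate case $|S|=2$ can be checked directly). I plan to prove the stronger statement that $A\mapsto A\setminus a$ is a label- and cover-preserving isomorphism from the reduced lattice of $\bfM$ onto that of $\bfM/a$. Dually this is the map $F\mapsto F\cap(S\setminus a)$ on proper flats of $\bfM^*$, which is always surjective because every flat of $\bfM^*\setminus a$ is the trace of either itself or itself-plus-$a$ as a flat of $\bfM^*$. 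Injectivity is where the hypothesis enters: $\{a\}$ is not a flat of $\bfM^*$, so some $y\neq a$ is parallel to $a$ in $\bfM^*$. For any flat $F$ of $\bfM^*$ containing $a$, parallelism forces $y\in F$, hence $y\in F\setminus\{a\}$; since $a$ is parallel to $y$, every flat of $\bfM^*$ containing $y$ must also contain $a$, but $F\setminus\{a\}$ does not, so $F\setminus\{a\}$ fails to be a flat. Thus no two distinct flats of $\bfM^*$ differ only in $a$, giving injectivity. Labels, covers, and the lex-greatest-basis condition then transport verbatim (no edge has label $a$, and the lex-greatest basis of $A$ in $\bfM$ equals that of $A\setminus a$ in $\bfM/a$), producing the claimed bijection of decreasing chains.

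For Case~2 chains passing through $S\setminus a$, the sublattice of T-flats of $\bfM$ contained in $S\setminus a$ coincides with the lattice of T-flats of $\bfM\setminus a$, since a circuit of $\bfM$ lies in $S\setminus a$ iff it is a circuit of $\bfM\setminus a$. Stripping the initial label-$a$ edge converts such a chain of $\bfM$ into a decreasing chain of $\bfM\setminus a$, and prepending $S\lessdot S\setminus a$ to any decreasing chain of $\bfM\setminus a$ conversely yields a valid decreasing chain of $\bfM$: the initial label $a$ dominates all subsequent labels, and $a$, not being a coloop of $\bfM$, lies in the lex-greatest basis of $\bfM|S$, so the subset-of-basis condition at the new edge holds. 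This is an explicit bijection.

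For Case~2 chains not passing through $S\setminus a$, every visited T-flat contains $a$ by the uniform observation, so $A\mapsto A\setminus a$ transports the chain into the reduced lattice of $\bfM/a$. Labels transfer verbatim (they are all $<a$), levels are preserved since $l_{\bfM/a}(A\setminus a)=l_{\bfM}(A)$ whenever $a\in A$, and the lex-greatest-basis condition translates because the lex-greatest basis of such an $A$ in $\bfM$ is $\{a\}$ together with the lex-greatest basis of $A\setminus a$ in $\bfM/a$ (the maximality of $a$ forces $a$ into any lex-greatest basis, $a$ not being a coloop). This yields an injection from chains of $\bfM$ not passing through $S\setminus a$ into decreasing chains of $\bfM/a$. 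The main obstacle will be verifying that this injection is surjective: a priori an intermediate T-flat $H'$ of $\bfM/a$ lifting to a T-flat of $\bfM$ avoiding $a$ could break cover-preservation under the transport. Rather than ruling this out by a direct combinatorial argument, I will invoke the counting identity combined with the previous paragraph: the decreasing chains of $\bfM$ split along the dichotomy into (chains through $S\setminus a$)$+$(chains not through), which equals $\beta(\bfM\setminus a)+\beta(\bfM/a)=\beta(\bfM)$; since the ``through'' case already matches the first summand bijectively, the injection on the ``not through'' side must be a bijection, completing the proof.
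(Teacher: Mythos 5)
Your ``uniform observation'' correctly identifies that the label $a$ can appear only on the first edge, and that it does so exactly when that edge is $S\lessdot S\setminus a$.  That observation yields the genuine dichotomy \emph{no chain label equals $a$} versus \emph{the chain begins with $S\lessdot S\setminus a$}, and your argument that a first cover cannot remove $a$ together with other elements (it would land strictly inside the T-flat $S\setminus a$, contradicting maximality) is sound.  However, you then silently upgrade ``no label equals $a$'' to ``$a$ is never removed, so every visited T-flat contains $a$.''  That inference is not justified for covers beyond the first one.  A cover $J\lessdot I$ in the middle of the chain with $a\in I\setminus J$ and $|I\setminus J|\ge 2$ has label $\min(I\setminus J)<a$; the maximality argument that blocks this at the top rung needs $I\setminus a$ to be a T-flat, which you have no reason to assume when $I\ne S$.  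The paper's proof treats precisely this configuration as a real case (``suppose $a\in I$ but $a\notin J$ \ldots so we must have that there is more than one element in $I\setminus J$'') and checks directly that $J\lessdot I\setminus a$ remains a cover in $\bfM/a$ with the same label and with the lex-greatest-basis condition intact.  Since your transport $A\mapsto A\setminus a$ is only proved to carry decreasing chains to decreasing chains under the assumption that $a$ lies in every visited T-flat, you have not shown the transport is well defined, and the counting step $\beta(\bfM)=\beta(\bfM\setminus a)+\beta(\bfM/a)$ cannot rescue this: an injection that might fail to exist cannot be upgraded to a bijection.  Either prove that $a$ in fact stays in every flat of a non-$S\setminus a$ chain, or handle the mixed cover $a\in I$, $a\notin J$ directly as the paper does.

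Two smaller points.  In Case~1 your argument via parallelism in $\bfM^*$ is a more detailed route to the assertion the paper simply states (``$\bfM/a$ has the same lattice of T-flats''), and it is a nice addition, but you should say explicitly that $a$ is not a loop of $\bfM^*$ (equivalently not a coloop of $\bfM$), which does follow from connectedness when $|S|\ge 2$.  Also, you write the cover as $G\lessdot F$ but then form $\min(G\setminus F)$; the label of a cover $J\lessdot I$ in the T-flat lattice is $\min(I\setminus J)$, the minimum of the \emph{removed} set.
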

\begin{proof}
If $S\backslash a$ is not a T-flat the $\bfM /a$ and has the same lattice of T-flats as $\bfM$, except the T-flats that contained the element $\{a\}$ are renamed without it.  The labels of the decreasing chains remain unchanged, because $a=\max (S)$ so was not a label except possibly on the first cover, but the labels are the minimum element from the removed set and we assumed that $S\backslash a$ was not a T-flat, so $a$ is not a label.  Now, we will suppose that $S\backslash a$ is a T-flat for the remainder of the argument.  

The lattice of T-flats for $\bfM \backslash a$ is the interval $[\emptyset, S\backslash a]$ and the T-flats remain unchanged.  Since these T-flats never contained the element $\{a\}$, it cannot be in the lexicographically greatest basis nor the edge label.  So, therefore the labels remain unchanged and the decreasing chains of $\bfM \backslash a$ are precisely those from $\bfM$ that pass through $S\backslash a$.  Since we have removed the last cover, we must therefore remove the last label which was $\{a\}$.  

The lattice of T-flats for $\bfM /a$ is obtained from the lattice of T-flats $\bfM$ by intersecting all T-flats with $S\backslash a$ and identifying any redundant labels.  In other words, we take all T-flats that contain $\{a\}$ and remove the element $a$ and then identify common labels.  So if $J \lessdot I$ is an edge of a decreasing label in $\bfM$ so that $a\in J$, then $a\in$ lexicographically greatest basis of $I$, call it $B$.  Indeed, since $S\backslash a$ is a T-flat, then $\{a\}$ is independent, so if $B'$ is the lexicographically greatest basis, then there is a $b<a$ so that $(B'\backslash b)\cup \{a\}$ is a basis and is lexicographically larger than $B'$.  Thus, the lexicographically greatest basis of $I\backslash a$ is $B\backslash a$.  Since $(I\backslash a)\backslash (J\backslash a)=I\backslash J \subseteq B$ and $a\in J$ then $(I\backslash a)\backslash (J\backslash a)\subseteq B\backslash a$, so the edge remains a decreasing chain with the same label.  

Suppose that $a\in I$ but $a\notin J$.  If $I\backslash J=\{a\}$, then $I=S$ and $J=S\backslash a$, else it could not be a decreasing chain, because $a=\max(S)$ and this is the previous case.  So we must have that there is more than one element in $I\backslash J$.  Thus in $\bfM /a$, $J \lessdot I\backslash a$ is an edge.  If we let $B$ be the lexicographically greatest basis in $I$, then $a\in B$ by the earlier argument and the lexicographically greatest basis of $I\backslash a$ is $B\backslash a$.  So $(I\backslash a)\backslash J\subseteq B\backslash a$.  Also, $\min(I\backslash J)=\min((I\backslash a)\backslash J)$ since $a\neq \min (I\backslash J)$, so the label remains the same.

\end{proof}

The basic cycles of the reduced homology of the reduced broken circuit complex decompose under restriction and contraction of the largest element just as the beta invariant does as shown in \cite{C}; i.e., $$\beta(\bfM)=\beta(\bfM /a)+\beta(\bfM \backslash a).$$  Moreover, $|\beta \mbox{nbc}(\bfM)|=\beta(\bfM)$  the beta invariant of a matroid $\bfM$ \cite{C}.

When $\bfM$ is a connected matroid and $a$ is the largest element in the ordering on $S$ the simplicial complex $\ol{BC}(\bfM \backslash a)$ 
is a subcomplex of $\ol{BC} (\bfM)$ consisting of all faces $\ol{BC} (\bfM)$ that do not contain $a$ by 
Proposition \ref{P:decreasingchains}.  
This induces a canonical exact sequence of reduced chain complexes $$0\to \widetilde{C}(\ol{BC} 
(\bfM\backslash a))\to \widetilde{C}(\ol{BC} (\bfM))\to \widetilde{C}(\ol{BC} (\bfM/ a))[-1]\to 0,$$  
which induces the following long exact sequence in reduced homology: 
\[
\begin{CD}
0 \rightarrow \widetilde{H}_{l-1}(\ol{BC}(\bfM\backslash a);\Bbbk) \stackrel{\epsilon}{\lra}
@.
\widetilde{H}_{l-1}(\ol{BC}(\bfM);\Bbbk)\stackrel{\delta}{\lra}
@.
\widetilde{H}_{l-2} (\ol{BC}(\bfM/ a);\Bbbk) \rightarrow 0 
\end{CD}
\]
where $l$ is the level of $\bfM$.

We describe the maps explicitly from Ziegler's description of the basic cycles in reduced homology. More precisly, for each $B\in \beta \mbox{nbc}(\bfM)$,  $$\overline{\sigma}_B=\sum_{i_1=0}^{e_1} \sum_{i_2=0}^{e_2}\ldots \sum_{i_n=0}^{e_n} (-1)^{i_1+\ldots +i_n}
[(A_1-a_{i_1}^1),(A_2-a_{i_2}^2), \ldots , (A_n-a_{i_n}^n)]$$ is
the associated basic cycle in the reduced homology of the reduced broken circuit complex given in \cite{Bj} and \cite{Zg}.  The sets $A_i=\{i\} \cup \varphi^{-1}\{i\}$ are written in increasing 
order where $\varphi : B\to S$ is the map defined as $\varphi (b)=\min (c^*(B,b))$ and  $e_i=|A_i|-1$.  

\begin{lemma} \label{L:HomologyMaps}
For $\ol{\sigma}_B\in \widetilde{H}_{l-1}(\ol{BC}(\bfM\backslash a);\Bbbk)$, $$\epsilon (\ol{\sigma}_B)=\ol{\sigma}_B 
\in \widetilde{H}_{l-1}(\ol{BC}(\bfM);\Bbbk)$$ and the map $\delta$ evaluated on a cycle
$\ol{\sigma} _B \in \widetilde{H}_{l-1}(\ol{BC}(\bfM);\Bbbk)$ yields
$$\delta(\ol{\sigma} _B)=\left\{ \begin{array}{ll} \ol{\sigma}_{B \backslash a} & \mbox{if } a\in 
B\\
0 & \mbox{ otherwise.}\\
\end{array}
\right.$$  

\end{lemma}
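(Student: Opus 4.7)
The plan is to verify both formulas at the chain level by tracking Ziegler's explicit cycle $\ol{\sigma}_B$ through the maps in the canonical short exact sequence. The first map is simply the inclusion $\iota : \widetilde{C}(\ol{BC}(\bfM\backslash a)) \hookrightarrow \widetilde{C}(\ol{BC}(\bfM))$, and the quotient map $\pi : \widetilde{C}(\ol{BC}(\bfM)) \to \widetilde{C}(\ol{BC}(\bfM/a))[-1]$ sends a simplex $\sigma$ to $\pm(\sigma\setminus\{a\})$ when $a\in\sigma$ and to $0$ otherwise. Since $\epsilon$ and $\delta$ are the induced maps on reduced homology, it suffices to compute $\iota(\ol{\sigma}_B)$ and $\pi(\ol{\sigma}_B)$ at the chain level.

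For $\epsilon$, Proposition~\ref{P:decreasingchains} identifies $\beta\mbox{nbc}(\bfM\backslash a)$ with those $B\in\beta\mbox{nbc}(\bfM)$ satisfying $a\notin B$. I would then check that for such $B$ the dual circuits $c^*(B,b)$ used in Ziegler's construction agree whether computed in $\bfM\backslash a$ or in $\bfM$, so that the map $\varphi$, the blocks $A_i=\{i\}\cup\varphi^{-1}\{i\}$, and hence the entire formal sum $\ol{\sigma}_B$ coincide in both complexes. The inclusion $\iota$ therefore carries $\ol{\sigma}_B$ to itself, giving the stated formula for $\epsilon$.

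For $\delta$, the case $a\notin B$ is immediate from exactness: the preceding paragraph shows that $\ol{\sigma}_B$ already lies in the image of $\epsilon$, so $\delta(\ol{\sigma}_B)=0$. When $a\in B$, I would first note that $B\in\beta\mbox{nbc}(\bfM)$ forces $\varphi(a)<a$ (since $a=\max(S)$), so $a$ belongs to the unique block $A_{\varphi(a)}$ and to no other $A_i$. Consequently the summands of $\ol{\sigma}_B$ that contain $a$ as a vertex are precisely those in which the element removed from $A_{\varphi(a)}$ is different from $a$; applying $\pi$ to these and deleting $a$ from each simplex, I would match the result term-by-term with Ziegler's formula for $\ol{\sigma}_{B\setminus a}$, using Proposition~\ref{P:decreasingchains} to conclude $B\setminus a\in\beta\mbox{nbc}(\bfM/a)$ and the identity $c^*_{\bfM/a}(B\setminus a,b)=c^*_{\bfM}(B,b)\setminus\{a\}$ for $b\in B\setminus a$ to match the $\varphi$-maps (and hence the blocks) on both sides.

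The main obstacle is the sign and reindexing bookkeeping in the $a\in B$ case: one must check that the Koszul signs produced by stripping $a$ from each simplex $[(A_1-a^1_{i_1}),\ldots,(A_n-a^n_{i_n})]$, together with the degree shift $[-1]$ in the target complex, combine to give exactly the alternating signs $(-1)^{i_1+\cdots+i_n}$ of Ziegler's formula for $\ol{\sigma}_{B\setminus a}$ after the block $A_{\varphi(a)}$ is replaced by $A_{\varphi(a)}\setminus\{a\}$. Once this verification is in hand the remaining combinatorics is routine: the compatibility of dual circuits under contraction and the decomposition of decreasing chains under $\bfM\mapsto\bfM\backslash a$ and $\bfM\mapsto\bfM/a$ recorded in Proposition~\ref{P:decreasingchains} do all the work.
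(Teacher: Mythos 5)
Your approach matches the paper's: both verify the formulas at the chain level using Ziegler's explicit cycle formula, and both rely on showing that the map $\varphi$ (hence the blocks $A_i$) is preserved under restriction when $a\notin B$ and under contraction when $a\in B$, with Proposition~\ref{P:decreasingchains} handling the identification of $\beta$-nbc bases. The paper also glosses over the sign bookkeeping you flag (it simply asserts "removing $e_t$ does not change any of the coefficients"), and it handles $\delta(\ol{\sigma}_B)=0$ for $a\notin B$ implicitly at the chain level rather than by exactness, but these are cosmetic differences.
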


\begin{proof}
First we show that the map $\varphi \: B\to S$ where $B\in \beta \mbox{nbc}(\bfM)$ 
is preserved under restriction if 
$a\notin B$ and under contraction if $a\in B$.  

If $a\notin B$, then $c^*_{\bfM\backslash a}(B,b)=c^*_{\bfM}(B,b)\backslash a$.  Also, we know that $a\neq \min c^*(B,b)$ since $\min c^*(B,b)<b<a$.  
Therefore, the map $\varphi$ is preserved under restriction.  

For the contraction, we use a similar approach.  Since $a\in B$, then for all $b\in B$ and $b\neq a$, we have that $c^*(B,b)\subseteq (S\backslash B)\cup \{b\}$ so that $a\notin c^*(B,b)$ and thus $c^*(B,b)$ is a circuit 
in $\bfM \backslash a$.  Therefore, the map $\varphi$ is also preserved under contraction.
 
The map $\epsilon$ is induced from the inclusion.  Moreover, since we have that $B\in \beta \mbox{nbc} 
(\bfM \backslash a)$ is also in $\beta \mbox{nbc}(\bfM)$ and  $\varphi$
 is preserved under restriction on the maximal element, 
we see the subsets $A_i$ will not change so that the basis element given by $B$ in 
$\widetilde{H}_{l-1}(\ol{BC}(\bfM\backslash a);\Bbbk)$ is the same as the element given by $B$ in 
$\widetilde{H}_{l-1}(\ol{BC}(\bfM);\Bbbk).$  

The map $\delta$ is induced from the projection onto $\ol{BC}(\bfM/a)*a$ then removing $a$.  Moreover, 
we have that 

\noindent $\delta(\overline{\sigma}_B)$

\noindent \quad $\displaystyle = {\tiny \sum_{i_1=0}^{e_1} \ldots \sum_{i_t=0}^{e_t-1}\ldots \sum_{i_n=0}^{e_n} (-1)^{i_1+\ldots +i_n}
[(A_1-a_{i_1}^1),\ldots ,((A_t\backslash a)-a_{i_t}^t), \ldots , (A_n-a_{i_n}^n)}]$ 

\noindent \quad $\displaystyle =\overline{\sigma}_{B\backslash a}$

 because 
$\varphi$ preserves the $A_i$ under contraction and removing $e_t$ does not change any of the coefficients.  
\end{proof}

\section{Main Theorems}\label{TSpaces}

The problem of finding a suitable combinatorial model for the vector spaces $T_A(\phi)$
has been changed in Section \ref{DimOfTSpaces} to finding a combinatorial model for the beta invariant.  The previous section showed that the reduced homology of the reduced broken circuit complex has the requisite properties and is described completely from the matroid $\bfM$.  We begin by defining a basis for the multiplicity spaces, which is a component of the dual of the T-spaces.  An interesting basis for a symmetric power of the multiplicity space is described from this new basis in Theorem \ref{T:UniformExample}.  We also explicitly define the canonical isomorphism between the homology of the reduced broken circuit complex and the T-spaces in Theorem \ref{C:canonicalIso}.  Finally, Theorem \ref{T:CommDiagram} proves that one component of the differential is the topological map defined in Section \ref{BrokenCircuitComplex}.

\begin{definition}\label{T:posDim}
Each $B\in \beta \mbox{nbc}(\bfM^*)$ determines canonically an element $x_B$ of the multiplicity space $S_S(\phi)$ defined as follows.
 
Let $J_0\subset J_1\subset \ldots \subset J_{l(S)}$ be a chain in the lattice of T-flats of $\bfM$ corresponding to $B$.  Set $$x_B=\phi(u_1)\cdot \phi(u_2)\cdot \ldots \cdot \phi(u_{l(S)}),$$ where each $\displaystyle u_i=e_p +\sum_{j<p}x_j e_j$ is 
the unique element of  $U_{J_i\backslash J_{i-1}},$ 
where $p$ is the largest element of $J_i\backslash J_{i-1}$ in the fixed ordering so that $\phi(u_i) 
\in V_{J_{i}\backslash J_{i-1}}\cap V_{J_{i-1}}$  \cite[Remark 5.2.3]{T}.
\end{definition} 

\begin{theorem}\label{T:MultiplicityBasis}
The collection $\{x_B\mid B\in \beta \mbox{nbc}(\bfM^*)\}$ is a basis for the multiplicity space $S_S(\phi)$.
\end{theorem}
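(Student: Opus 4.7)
The plan is to match dimensions first and then prove linear independence by induction on $|S|$. By Theorem \ref{C:BetaDim}, combined with the identification (recalled at the start of this section) of the multiplicity space $S_S(\phi)$ as a component of the dual of $T_S(\phi)$, the target dimension is $\dim S_S(\phi) = \beta(\bfM^*) = |\beta\mbox{nbc}(\bfM^*)|$, so the proposed family has the correct cardinality. Consequently it suffices to prove that $\{x_B \mid B \in \beta\mbox{nbc}(\bfM^*)\}$ is linearly independent in $S_S(\phi)$.

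The induction is on $|S|$. Base cases are circuits, where $\beta\mbox{nbc}(\bfM^*)$ is a singleton and $x_B \neq 0$ can be read off directly from Definition \ref{T:posDim}, together with the degenerate situations in which $\bfM^*$ is disconnected and both sides vanish by property (2) listed after Theorem \ref{C:BetaDim}. For the inductive step, assume $\bfM^*$ is connected and let $a = \max(S)$. By property (4) of $\beta$ applied to $\bfM^*$, one has $\beta(\bfM^*) = \beta(\bfM^*/a) + \beta(\bfM^* \backslash a)$. Correspondingly I would partition $\beta\mbox{nbc}(\bfM^*)$ into the subfamilies with $a \notin B$ and $a \in B$, and use Proposition \ref{P:decreasingchains} (applied to $\bfM^*$) together with the dictionary in Remark \ref{R:DualZg} to identify these two subfamilies with $\beta\mbox{nbc}(\bfM^* \backslash a)$ and, up to the natural shift, with $\beta\mbox{nbc}(\bfM^*/a)$.

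To assemble the two pieces into a basis of $S_S(\phi)$, I plan to construct a short exact sequence of multiplicity spaces mirroring the short exact sequence of reduced chain complexes appearing just before Lemma \ref{L:HomologyMaps}: restriction into $S_{S \backslash a}(\phi|_{S \backslash a})$ on one end and a connecting contraction map onto $S_{S \backslash a}(\phi/a)$ on the other. The subfamily $\{x_B : a \notin B\}$ should land in the restriction's multiplicity space and give the basis there guaranteed by induction, while the subfamily $\{x_B : a \in B\}$ should be carried by the connecting map onto the basis $\{x_{B \backslash a}\}$ of the contraction's multiplicity space. A standard splitting argument would then promote linear independence in the two outer terms to linear independence of the full family, closing the induction.

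The main obstacle is the compatibility check that drives the splitting. Unpacking Definition \ref{T:posDim}, one must track how the factors $\phi(u_i) = \phi(e_p + \sum_{j < p} x_j e_j)$ transform under restriction to $S \backslash a$ and under contraction of $a$. The delicate point is to show that when $a \in B$, the unique factor corresponding to the cover of the defining chain that contains $a$ absorbs all the $a$-dependence, so that after contracting the remaining factors assemble into $x_{B \backslash a}$; and that when $a \notin B$, the defining chain of $x_B$ already lies entirely in the sublattice of T-flats of $\bfM^* \backslash a$. This is where the combinatorial decomposition of Proposition \ref{P:decreasingchains} and the algebraic product structure of $x_B$ in Definition \ref{T:posDim} must be matched term by term, and I expect it to be the most technical step.
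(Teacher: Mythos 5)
Your overall plan---induction on $|S|$, splitting $\beta \mbox{nbc}(\bfM^*)$ according to whether $a=\max(S)$ lies in $B$, running the two pieces through a short exact sequence of multiplicity spaces, and closing with a dimension count---coincides with the paper's. However, you have matched the two halves of the partition to the wrong ends of that sequence, and this would make the splitting argument collapse. The sequence you plan to construct is already available as \cite[Theorem 5.4.5]{T},
\[
0 \to S_{S\backslash a}(\phi)\otimes V_a(\phi) \xrightarrow{\ \nu\ } S_S(\phi) \xrightarrow{\ \pi_{S/a}\ } S_{S\backslash a}(\phi/a) \to 0,
\]
with $\nu$ multiplication by $\phi(e_a)$ and $S_{S\backslash a}(\phi)=S_{S\backslash a}(\phi\backslash a)$ the \emph{deletion} side. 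When $a\in B$, the defining chain of $x_B$ must have $S\backslash a\lessdot S$ as its top cover (since $a=\max(S)$, $a$ can only appear as the largest edge label), so by Definition \ref{T:posDim} the top factor is $\phi(u_{l(S)})=\phi(e_a)$ and $x_B=\phi(e_a)\cdot x_{B\backslash a}=\nu\bigl(x_{B\backslash a}\otimes\phi(e_a)\bigr)$; that is, the $a\in B$ elements sit on the \emph{restriction} (injective) end. When $a\notin B$, $\pi_{S/a}(x_B)=x_B$ is the corresponding element of the \emph{contraction} $S_{S\backslash a}(\phi/a)$, the surjective end. You have these reversed---plausibly because deletion of $a$ in $\bfM^*$ corresponds to contraction in $\bfM$ and vice versa, while the $x_B$ and the multiplicity spaces are built from $\bfM$, not $\bfM^*$---and as written the inductive hypotheses would be applied to the wrong summands.

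Once the roles are corrected, your linear-independence-then-count route goes through and is equivalent to the paper's, which instead shows the collection \emph{spans} (the $a\in B$ elements span $\im\nu$, and the $a\notin B$ elements surject onto a basis of the cokernel) before counting. Also, the ``compatibility check'' you flag as the main obstacle is much lighter than you anticipate: it reduces to the single observation $u_{l(S)}=e_a$ when $J_{l(S)}\backslash J_{l(S)-1}=\{a\}$, since Proposition \ref{P:decreasingchains} already places the remaining covers of an $a\in B$ chain inside the sublattice for $\bfM\backslash a$ and, for $a\notin B$, identifies the chain with one in the lattice for $\bfM/a$. No term-by-term tracking of the other factors $\phi(u_i)$ is required.
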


\begin{proof}
If $\bfM$ is disconnected, then $S_S(\phi)=0$ by \cite[Theorem 5.3.5]{T}, $\beta (\bfM^*)=0$ by \cite{C} and $\beta \mbox{nbc}(\bfM^*)=\emptyset.$  Therefore, we may assume that $\bfM$ is a connected matroid.
We will use induction on $|S|$.
Suppose $ |S|=1$, then $S=\{a\}$ is dependent or independent.  If $S$ is independent, then $S_S(\phi)= 0$ by \cite[Definition
 2.2.3]{T}.  Also, $S$ is a loop in $\bfM ^*$, so $r(\bfM^*)=0 $ and $\beta \mbox{nbc} (\bfM^*)=\emptyset$.  
Therefore, there is nothing to show.  If $S$ is dependent then $S$ is independent in $\bfM^*$, 
so $r(\bfM^*)=1$ and has no circuits and therefore no broken circuits.  Therefore, $\beta \mbox{nbc}(\bfM^*)=\{\{a\}\}$.  
Thus $x_{\{a\}} $ is the product of zero factors, hence $x_{\{a\}} =1\in \Bbbk =S_S (\phi )$ by Theorem \ref{T:posDim} and \cite[Definition 2.2.3 (c)]{T}.

Now, suppose $n=|S|>1$.  Let $a=\max (S)$.  By our inductive hypothesis and Proposition \ref{P:decreasingchains}, $\{x_B\mid B\in 
\beta \mbox{nbc}(\bfM^*\backslash a)
=\beta \mbox{nbc}((\bfM/a)^*)\}$ is a basis for $S_{S\backslash a}(\phi/a)$ and $\{x_B\mid B\in \beta \mbox{nbc}(\bfM^*/a)=
\beta \mbox{nbc}((\bfM\backslash a)^*)\}$ is a basis for $S_{S\backslash a}(\phi\backslash a)$.

We have the short exact sequence of vector spaces from \cite[Theorem 5.4.5]{T}
\[
\begin{CD}
0\rightarrow S_{S\backslash a}(\phi)\otimes V_{a}(\phi)
@> \nu >>
 S_S(\phi)
@>\pi_{S/a} >>
S_{S\backslash a}(\phi .S\backslash a)\rightarrow 0
\end{CD}
\]
When the basis $B\in \beta \mbox{nbc}(\bfM^*)$ and $a\notin B$, then $\pi_{S/a}(x_B)=x_B$ and $B\in \beta \mbox{nbc}(\bfM^*
\backslash a)$.  Let $x_B$ be a basis element in $S_{S\backslash a}(\phi)$ where 
$B\in \beta \mbox{nbc}(\bfM^*/a)$ and $\phi(e_a)$ the linear form in $V_a (\phi)$.  
Then $$\nu (x_B\otimes \phi(e_a))=\phi(e_a)\cdot x_B=x_{B\cup a}\in S_S(\phi),$$  because $e_a=u_{l(S)}$ in $U_{S\backslash 
(S\backslash a)}=U_a$.  
Thus, $\{x_B \mid B\in \beta \mbox{nbc}(\bfM^*)\}$ gives a spanning set of 
$S_S(\phi)$.  However, there are $\beta(\bfM^*)$ elements in this spanning set and $\dim_\Bbbk S_S(\phi)=\dim _\Bbbk 
T_S(\phi)=\beta(\bfM)=\beta(\bfM^*)$, so $\{x_B \mid B\in \beta \mbox{nbc}(\bfM^*)\}$ is a basis of $S_S(\phi)$.
\end{proof}

Theorem \ref{T:MultiplicityBasis} leads to an interesting description of a basis for the symmetric power of $V$ because \cite[Remark 2.2.10]{T} showed that $S_S(\phi)=S_{n-r-1}V$, the $(n-r-1)$-th symmetric power of $V=\im \phi$ when $\phi$ is a representation of the uniform matroid $U_{r,n}$.

\begin{theorem}\label{T:UniformExample}
If $\phi$ is a representation 
of $U_{r,n}$, then the set $$\{ \phi (e_B)\mid |B|=n-r-1 \mbox{ and } 1,2 \notin B \}$$ is a basis for $S_S(\phi)=S_{n-r-1}V$, the $(n-r-1)^{st}$ symmetric power of $V=\im \phi$.  In this context, $\phi(e_B)=\phi(e_{b_1})\cdot \phi(e_{b_2})\cdot \ldots \cdot \phi(e_{b_{n-r-1}})$, where $b_i\in B$.
\end{theorem}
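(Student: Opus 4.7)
The plan is to deduce Theorem \ref{T:UniformExample} directly from Theorem \ref{T:MultiplicityBasis} by unraveling what $x_B$ looks like when $\bfM = U_{r,n}$. Since $\phi$ represents $U_{r,n}$, the identification $S_S(\phi)=S_{n-r-1}V$ from \cite[Remark 2.2.10]{T} reduces the claim to exhibiting a basis of $S_S(\phi)$ of the stated form. By Theorem \ref{T:MultiplicityBasis}, together with the description
$$\beta\mbox{nbc}(U_{r,n}^*)=\{X\subseteq S \mid |X|=n-r,\ 1\in X,\ 2\notin X\}$$
already recorded in Section \ref{BrokenCircuitComplex}, we obtain a basis $\{x_B\}$ of $S_S(\phi)$ indexed by such $B$. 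It therefore suffices to show that $x_B=\phi(e_{B\setminus\{1\}})$; reindexing by $B'=B\setminus\{1\}$ then produces exactly the set in the statement.

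The bulk of the work is identifying the chain $J_0\subset J_1\subset\cdots\subset J_{n-r-1}=S$ associated to $B$ and computing each factor $\phi(u_i)$. For the uniform matroid every subset of $S$ of size at least $r+1$ is a union of $(r+1)$-element circuits, hence a T-flat, so the reduced lattice of T-flats containing $1$ is graded by cardinality and any cover $J_{i-1}\lessdot J_i$ differs by a single element $p_i$. Moreover the lexicographically greatest basis of any T-flat $A$ of $U_{r,n}$ is the set of top $r$ elements of $A$ in the fixed order $1<2<\cdots<n$. Matching the top-to-bottom description of Section \ref{BasisExample} with the bottom-to-top indexing of Definition \ref{T:posDim}, one sees that the elements added going up the chain are exactly the elements of $B\setminus\{1\}$; that is, $\{p_1,\ldots,p_{n-r-1}\}=B\setminus\{1\}$.

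With the chain in this form, Definition \ref{T:posDim} simplifies at once: since $J_i\setminus J_{i-1}=\{p_i\}$ is a singleton, the subspace $U_{J_i\setminus J_{i-1}}$ is one-dimensional, and the unique element $u_i$ of the prescribed form $e_p+\sum_{j<p}x_j e_j$ is simply $u_i=e_{p_i}$ because the sum is empty. Therefore
$$x_B=\prod_{i=1}^{n-r-1}\phi(u_i)=\prod_{i=1}^{n-r-1}\phi(e_{p_i})=\phi(e_{B\setminus\{1\}}),$$
which, after the reindexing $B'=B\setminus\{1\}$, is precisely the basis element in the statement.

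The step I expect to be the main obstacle is the bookkeeping in the middle paragraph: verifying cleanly that for $U_{r,n}$ the procedure of Section \ref{BasisExample} produces exactly the chain that removes the elements of $B\setminus\{1\}$ from $S$ one at a time in decreasing order. The key inputs are that the lexicographically greatest basis of any T-flat is its top $r$ elements (so the decreasing-label requirement is compatible with the allowed covers), together with the fact that $2\notin B$ forces the smallest T-flat in the chain to contain $\{1,2\}$ so no label equals $2$. Once this correspondence is settled, the computation of $x_B$ becomes a direct substitution into Definition \ref{T:posDim}.
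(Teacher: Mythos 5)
The paper states this theorem without a written proof, presenting it only as an "interesting description" that "Theorem \ref{T:MultiplicityBasis} leads to." Your proposal supplies exactly the argument the author leaves implicit, and it is correct: you invoke the identification $S_S(\phi)=S_{n-r-1}V$ from \cite[Remark 2.2.10]{T}, apply Theorem~\ref{T:MultiplicityBasis} with the recorded description of $\beta\mbox{nbc}(U_{r,n}^*)$, and then compute $x_B$ via Definition~\ref{T:posDim}. The key simplification you identify — that for $U_{r,n}$ every T-flat is a set of size at least $r+1$, so covers in the chain drop a single element and hence $u_i=e_{p_i}$ with no correction terms — is precisely what makes $x_B=\phi(e_{B\setminus\{1\}})$, and the reindexing $B'=B\setminus\{1\}$ gives the stated basis. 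Your "bookkeeping" paragraph is slightly terse: to see that the chain attached to $B$ removes exactly the elements of $B\setminus\{1\}$ in decreasing order, one should note that (by Remark~\ref{R:DualZg}) the labels are $\min(J_i\setminus J_{i-1})$ and by the singleton-cover observation these are the removed elements themselves, so $B=\{1\}\cup\{\text{labels}\}$ forces $\{p_1,\ldots,p_{n-r-1}\}=B\setminus\{1\}$; and one should check that this candidate chain actually satisfies the decreasing-label/lex-greatest-basis condition, which amounts to the inequality $b_j>n-r+1-j$ that follows from $b_j\ge n-r+2-j$ when $B'\subseteq\{3,\ldots,n\}$. These are small points easily filled in; your overall structure is right and matches the route the paper intends.
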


Essentially, this says that we can define a basis of $S_{n-r-1}V$ by taking all $(n-r-1)$ products of the linear forms in a basis of $V$ defined by the columns $\{3, \ldots , n\}$ of $\phi$.  The standard basis of $S_{n-r-1}V$ are all monomials of degree $n-r-1$ in a basis of $V$.  This is a curiously unusual basis arriving in this context. 

As in \cite[Proof of Theorem 6.1]{T}, and using the fixed linear ordering on $S$,  
we identify each subset of $S$ with the 
increasing sequence of 
its elements. For a sequence 
$K=(a_1,\dots,a_q)$, we set  
\[
e_K = 
e_{a_1}\wedge \dots 
       \wedge e_{a_q} \in \wedge^q U
\quad\text{and}\quad  
v_K = 
\phi(e_{a_1})\wedge\dots
             \wedge\phi(e_{a_q})\in
             \wedge^q V.
\] 
If $K$ is a subset of $S$ and 
the elements $\phi(e_{a_1}), \dots, \phi(e_{a_q})$ 
form a basis of $V_K$, we write $e^*_K$ and $v^*_K$ 
for the unique elements of $\wedge^q U_K^*$ and 
$\wedge^q V^*_K$ respectively, such that 
$e^*_K(e_K)=1$ and $v^*_K(v_K)=1$. 
In particular, if $B$ is a basis of $\bfM^*$ then $v_{S\backslash B}$ is a basis 
of $\wedge^{r(S)} V_S$ and $v^*_{S\backslash B}$ 
is the dual basis of $\wedge^{r(S)} V^*_S$.

\begin{lemma}\label{T:canonicalIso}
Let $\bfM$ be the ordered matroid on $S$ obtained from $\phi$ of level $l$.  Then there is a 
canonical isomorphism
$$
\begin{CD}
\widetilde{H}_{l-1}(\ol{BC}(\bfM^*);\Bbbk)  
@>\theta >\cong >
T_S(\phi)
\end{CD}$$
which sends $$\ol{\sigma} _B \mapsto x_B^*\otimes e_S\otimes v_{S\backslash B}^*,$$
where $\{x_B^*\mid B\in \beta \mbox{nbc}(\bfM^*)\}$ is the basis of $S^*_S(\phi)$ dual to the basis $\{x_B\mid B\in \beta \mbox{nbc}(\bfM^*)\}$ of $S_S(\phi)$.
\end{lemma}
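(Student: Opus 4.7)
The plan is to show that $\theta$ is a well-defined linear map that sends the canonical basis of $\widetilde{H}_{l-1}(\ol{BC}(\bfM^*); \Bbbk)$ to a linearly independent set in $T_S(\phi)$ of the appropriate size, and hence is an isomorphism.

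First I would compute the dimensions and check they agree. By Bj\"{o}rner's theorem (stated in Section \ref{BrokenCircuitComplex}), the basic cycles $\{\ol{\sigma}_B : B \in \beta \mbox{nbc}(\bfM^*)\}$ form a basis of $\widetilde{H}_{l-1}(\ol{BC}(\bfM^*); \Bbbk)$, so its dimension is $|\beta \mbox{nbc}(\bfM^*)| = \beta(\bfM^*)$. By property (3) of the beta invariant listed at the end of Section \ref{DimOfTSpaces}, $\beta(\bfM^*) = \beta(\bfM)$, and by Theorem \ref{C:BetaDim} applied to $A = S$ this equals $\dim T_S(\phi)$. So source and target have the same finite dimension.

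Next I would verify that the prescribed image $\theta(\ol{\sigma}_B) = x_B^* \otimes e_S \otimes v_{S\setminus B}^*$ is well-defined. By Theorem \ref{T:MultiplicityBasis} the collection $\{x_B\}$ is a basis of $S_S(\phi)$, so $\{x_B^*\}$ is the dual basis in $S_S^*(\phi)$; the factor $e_S$ is the canonical generator of the one-dimensional space $\wedge^{|S|} U_S$; and Remark \ref{R:DualZg}(1) gives that $S\setminus B$ is a basis of $\bfM$, so the vectors $\phi(e_{a})$ for $a \in S\setminus B$ span $V$, making $v_{S\setminus B}^*$ a well-defined generator of the one-dimensional space $\wedge^{r(S)} V_S^*$. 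Since Bj\"{o}rner's basic cycles form a basis, the assignment extends uniquely to a linear map $\theta$.

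For the isomorphism claim it suffices to show linear independence of the image elements. Here I would fix a reference basis $B_0 \in \beta \mbox{nbc}(\bfM^*)$ and use that $\wedge^{r(S)} V_S^*$ is one-dimensional to write $v_{S\setminus B}^* = \lambda_B \cdot v_{S\setminus B_0}^*$ for nonzero scalars $\lambda_B \in \Bbbk$. Then
$$\theta(\ol{\sigma}_B) = \lambda_B \cdot \bigl( x_B^* \otimes e_S \otimes v_{S\setminus B_0}^* \bigr).$$
Since $\{x_B^*\}$ is linearly independent in $S_S^*(\phi)$ and the fixed factor $e_S \otimes v_{S\setminus B_0}^*$ is a nonzero element of the one-dimensional space $\wedge^{|S|} U_S \otimes \wedge^{r(S)} V_S^*$, the tensors $x_B^* \otimes e_S \otimes v_{S\setminus B_0}^*$ are linearly independent in $T_S(\phi)$, and rescaling by the nonzero $\lambda_B$ preserves this. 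Canonicity is automatic, since every ingredient ($x_B^*$, $e_S$, $v_{S\setminus B}^*$) is determined by the fixed ordering of $S$ and the representation $\phi$.

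The hard part will be identifying the target elements as genuine members of $T_S(\phi)$ in the sense of \cite{T}, i.e., locating the component of $T_S(\phi)$ relevant for this formula and recognizing it as a subspace of $S_S^*(\phi) \otimes \wedge^{|S|} U_S \otimes \wedge^{r(S)} V_S^*$. This requires unpacking Tchernev's definition of $T_S(\phi)$ and matching the tensor factors; once this identification is granted, the dimension count from Theorem \ref{C:BetaDim} forces the inclusion $\theta(\widetilde{H}_{l-1}(\ol{BC}(\bfM^*); \Bbbk)) \subseteq T_S(\phi)$ to be an equality, closing the proof.
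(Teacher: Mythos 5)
Your proof is correct and follows the same route as the paper: both match the canonical basic cycles $\{\ol{\sigma}_B : B \in \beta\mbox{nbc}(\bfM^*)\}$ of Bj\"{o}rner/Ziegler with the elements $\{x_B^* \otimes e_S \otimes v_{S\backslash B}^* : B \in \beta\mbox{nbc}(\bfM^*)\}$, which form a basis of $T_S(\phi)$ by Theorem \ref{T:MultiplicityBasis}. The ``hard part'' you flag at the end is not actually a gap: in Tchernev's construction $T_S(\phi)$ \emph{is} the tensor product $S_S^*(\phi) \otimes \wedge^{|S|} U_S \otimes \wedge^{r(S)} V_S^*$ (this appears explicitly in the displayed composition inside the proof of Theorem \ref{T:CommDiagram}), with the last two factors one-dimensional, so once $\{x_B^*\}$ is known to be a basis of $S_S^*(\phi)$ the displayed elements manifestly form a basis of $T_S(\phi)$ and no inclusion-plus-dimension-count argument is needed.
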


\begin{proof} If $\bfM$ is disconnected, then $T_S(\phi)=0$ and $\ol{BC}(\bfM^*)=\emptyset$ which gives that $\widetilde{H}_{l-1}(\ol{BC}(\bfM^*))=0$ unless $l=0$, in which case $\bfM$ is connected.  Therefore, we may assume $\bfM$ is a connected matroid.
We showed in Theorem \ref{T:MultiplicityBasis} 
that $\{x_B\mid B\in \beta \mbox{nbc}(\bfM^*)\}$ is a canonical basis of $S_S(\phi)$, so $\{x_B^*\otimes e_S\otimes v_{S\backslash B}^* \mid 
B\in \beta \mbox{nbc}(\bfM^*)\}$ is a canonical basis for $T_S(\phi)$.  Thus, both sets of basis elements are indexed by $\beta \mbox{nbc}(\bfM^*)$, hence $\theta$ is an isomorphism. 
\end{proof}

The remind the reader of the definition of the elements of the T-spaces from \cite{T}, in order to highlight the basis elements of the T-spaces defined in Lemma \ref{T:canonicalIso} by completing the example from Section \ref{BasisExample}.

\begin{example} \label{E:TchBasis}
In Section \ref{BasisExample}, we consider the free presentation of a multigraded module $L$

$$E\stackrel{\Phi}{\lra} G\lra L\lra 0,$$

given by $$\Phi = \left( \begin{array}{rrrrr}  x^3 & x^2y& xy^2& x^2 &y^3  \\ x^2 & 2xy & 3y^2 &x &0  \\ \end{array} \right)$$

with circuits $$\{\{ 1,4\}, \{ 1,2,3\}, \{ 1,2,5\}, \{ 1,3,5\}, \{ 2,3,4\}, \{ 2,3,5\}, \{ 2,4,5\}, \{ 3,4,5\}\}$$ and $\beta$-nbc bases $$\{\{1,4,5\}, \{1,3,4\}\}.$$

The basis elements of $S_{\{1,2,3,4,5\}}$ defined by the chain $\{1,4,5\}$ of the lattice of T-flats is given by $$x_{\{1,4,5\}}=\phi(e_4)\phi(e_5)=v_4\cdot v_5$$ and for $\{1,3,4\}$ by $$x_{\{1,3,4\}}=\phi(e_3)\phi(e_4)=v_3\cdot v_4,$$ since there is only one element removed as we descend the chain.

For computational purposes, it is often advantageous to work with $T^*_{\{1,2,3,4,5\}}(\phi)$ and then dualize rather than working directly with $T_{\{1,2,3,4,5\}}(\phi)$.  The basis for $T^*_{\{1,2,3,4,5\}}(\phi)$ is $$v_4\cdot v_5\otimes e^*_{12345} \otimes v_2\wedge v_3 \quad \mbox{ and }\quad v_3\cdot v_4\otimes e^*_{12345} \otimes v_2\wedge v_5.$$ 
The basis for $T_{\{1,2,3,4,5\}}(\phi)$ is dual to it given by $$(v_4\cdot v_5)^*\otimes e_{12345} \otimes v_2^*\wedge v_3^* \quad \mbox{ and }\quad (v_3\cdot v_4)^*\otimes e_{12345} \otimes v_2^*\wedge v_5^*.$$ 

For the last computation of $S_{12345}^*(\phi)$, we compute this term using divided powers, which is characteristic dependent.  The broken circuit basis, however did not need to specify a field at all.  In fact, all computations can be made over $\mathbb{Z}$, emphasizing the benefit of this new approach. 

\end{example}

\begin{theorem}\label{C:canonicalIso}
Let $\bfM$ be the ordered matroid obtained from $\phi$.  Then for each T-flat $A$ of level $l(A)$ there is a canonical isomorphism
$$\begin{CD}
\widetilde{H}_{l(A)-1}(\ol{BC}((\bfM|A)^*);\Bbbk) 
@>\theta |A >\cong>
T_A(\phi).
\end{CD}$$
\end{theorem}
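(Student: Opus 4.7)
The plan is to bootstrap the general statement from Lemma \ref{T:canonicalIso} by restricting to the T-flat $A$. Since $A$ is a T-flat of $\bfM$, the restriction $\bfM|A$ is itself a matroid on the ground set $A$, and it becomes an ordered matroid via the linear ordering inherited from $S$. Regarded inside its own ground set, $A$ plays the role that $S$ plays in Lemma \ref{T:canonicalIso}, and the level of $\bfM|A$ computed intrinsically is $|A|-r_{\bfM}(A)-1 = l(A)$.

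Next, I would apply Lemma \ref{T:canonicalIso} directly to the ordered matroid $\bfM|A$ with representation $\phi|A$. This gives a canonical isomorphism
\[
\widetilde{H}_{l(A)-1}(\ol{BC}((\bfM|A)^*);\Bbbk) \stackrel{\cong}{\lra} T_A(\phi|A),
\]
sending each basic cycle $\ol{\sigma}_B$ with $B \in \beta \mbox{nbc}((\bfM|A)^*)$ to $x_B^* \otimes e_A \otimes v_{A\backslash B}^*$, computed entirely inside $\bfM|A$.

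Finally, I would invoke the identification $T_A(\phi) = T_A(\phi|A)$ from \cite[Theorem 5.4.1]{T}, the same input already used in the proof of Theorem \ref{C:BetaDim}. Composing this equality with the isomorphism above produces the required map $\theta|A$. Define it explicitly by the same formula as $\theta$ in Lemma \ref{T:canonicalIso}, applied to the data inside $\bfM|A$.

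Because the argument is this short, there is no substantive obstacle; the only thing to verify is that the combinatorial data used to set up $\theta$ pulls back correctly under restriction. Specifically, a circuit of $\bfM|A$ is exactly a circuit of $\bfM$ contained in $A$, so the complex $\ol{BC}((\bfM|A)^*)$ is unambiguously determined by the induced order on $A$; the $\beta$-nbc bases of $(\bfM|A)^*$ index both a basis of $S_A(\phi|A)$ (via Theorem \ref{T:MultiplicityBasis} applied to $\bfM|A$) and the basic cycles of $\widetilde{H}_{l(A)-1}(\ol{BC}((\bfM|A)^*);\Bbbk)$; and the element $x_B$ of Definition \ref{T:posDim} computed in $\bfM|A$ arises from a chain in the sublattice $[\emptyset,A]$ of the T-flat lattice. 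These compatibilities are routine, and the result follows immediately.
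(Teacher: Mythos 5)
Your proposal is correct and matches the paper's own argument: both simply apply Lemma \ref{T:canonicalIso} to the ordered restricted matroid $\bfM|A$ with representation $\phi|A$, and then invoke the identification $T_A(\phi)=T_A(\phi|A)$ from \cite[Theorem 5.4.1]{T}, exactly as was done in the proof of Theorem \ref{C:BetaDim}. The extra compatibility checks you note are the same ones implicitly relied upon in the paper.
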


\begin{proof}
This follows by considering the matroid $\bfM |A$ as in proof of Theorem \ref{C:BetaDim}.
\end{proof}

\begin{lemma}\label{L:ExactT-spaces}
Let $\bfM$ be a connected matroid on $S$ with representation $\phi$.  If $S$ and $S\backslash a$ are T-flats of $\bfM$ and $l=l(S)$, then the sequence of $\Bbbk$ vector spaces 
\[
\begin{CD}
0\rightarrow T_{S\backslash a}(\phi/a)
@>(-1)^{l }(\pi .S\backslash a)_{S\backslash a}^{\phi} >>
T_{S}(\phi)
@>(-1)^{|S\backslash a|}\phi_{l}^{S,S\backslash a} >>
T_{S\backslash a}(\phi) \rightarrow 0
\end{CD}
\]
is exact.
\end{lemma}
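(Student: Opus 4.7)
The plan is to transport the desired short exact sequence from the long exact sequence in reduced homology of $\ol{BC}$ established in Section \ref{BrokenCircuitComplex}, using the canonical isomorphism of Theorem \ref{C:canonicalIso}. Since the three T-spaces and the prescribed maps do not depend on the chosen linear ordering of $S$, I may assume without loss of generality that $a=\max S$, which puts us in the setting of Proposition \ref{P:decreasingchains} and Lemma \ref{L:HomologyMaps}.

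First, I would identify each of the three T-spaces with a reduced homology group. Since $S\backslash a\in\mc T(\bfM)$, the element $a$ is not a coloop of $\bfM$, so $r_{\bfM\backslash a}(S\backslash a)=r(S)=r$ and $l_{\bfM\backslash a}(S\backslash a)=l-1$; likewise $a$ is not a loop (as $S$ is a union of circuits), so $r_{\bfM/a}(S\backslash a)=r-1$ and $l_{\bfM/a}(S\backslash a)=l$. Combining these with the standard dualities $(\bfM\backslash a)^{*}=\bfM^{*}/a$ and $(\bfM/a)^{*}=\bfM^{*}\backslash a$, Theorem \ref{C:canonicalIso} produces canonical isomorphisms
\begin{equation*}
T_{S\backslash a}(\phi/a)\cong\widetilde H_{l-1}(\ol{BC}(\bfM^{*}\backslash a)),\;\; T_{S}(\phi)\cong\widetilde H_{l-1}(\ol{BC}(\bfM^{*})),\;\; T_{S\backslash a}(\phi)\cong\widetilde H_{l-2}(\ol{BC}(\bfM^{*}/a)).
\end{equation*}
Since $\bfM^{*}$ is connected (equivalently, $\bfM$ is), the LES of Section \ref{BrokenCircuitComplex} applied to $\bfM^{*}$ yields
\begin{equation*}
0\to\widetilde H_{l-1}(\ol{BC}(\bfM^{*}\backslash a))\xrightarrow{\epsilon}\widetilde H_{l-1}(\ol{BC}(\bfM^{*}))\xrightarrow{\delta}\widetilde H_{l-2}(\ol{BC}(\bfM^{*}/a))\to 0,
\end{equation*}
and conjugating by the isomorphisms above produces a short exact sequence of T-spaces.

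What remains is to identify the transported maps $\theta\circ\epsilon\circ\theta^{-1}$ and $\theta\circ\delta\circ\theta^{-1}$ with the prescribed algebraic maps $(-1)^{l}(\pi.S\backslash a)_{S\backslash a}^{\phi}$ and $(-1)^{|S\backslash a|}\phi_{l}^{S,S\backslash a}$. I would do this by evaluating both on the $\beta$-nbc basis of Theorem \ref{T:MultiplicityBasis}. For $B\in\beta\mathrm{nbc}(\bfM^{*}\backslash a)$, which necessarily satisfies $a\notin B$, Lemma \ref{L:HomologyMaps} gives $\epsilon(\ol\sigma_{B})=\ol\sigma_{B}$; tracing through Definition \ref{T:posDim} and the definition of $(\pi.S\backslash a)_{S\backslash a}^{\phi}$ from \cite[Theorem 5.4.5]{T} shows that the image of $\theta(\ol\sigma_{B})$ under the algebraic map agrees with its image under $\theta\circ\epsilon\circ\theta^{-1}$ up to the sign $(-1)^{l}$. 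A parallel computation for $\delta$, which kills $\ol\sigma_{B}$ when $a\notin B$ and sends $\ol\sigma_{B}\mapsto\ol\sigma_{B\backslash a}$ when $a\in B$, matches $\phi_{l}^{S,S\backslash a}$ up to the sign $(-1)^{|S\backslash a|}$. The principal obstacle is this sign bookkeeping: each basis element $x_{B}^{*}\otimes e_{S}\otimes v_{S\backslash B}^{*}$ carries three independent orientations (coming from the symmetric factor, the exterior factor $\wedge^{|S|}U_{S}$, and the dual exterior factor $\wedge^{r}V_{S}^{*}$), and both $\epsilon$ and $\delta$ interact nontrivially with all three; once these sign conventions are reconciled, exactness is formal from the topological long exact sequence.
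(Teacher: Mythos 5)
Your route is genuinely different from the paper's, and as written it contains a substantial gap. The paper's own proof of Lemma \ref{L:ExactT-spaces} is essentially a citation: in \cite{T}, injectivity of $(-1)^{l}(\pi.S\backslash a)_{S\backslash a}^{\phi}$ is one component of the injective complexes in Theorem 7.7, and $T_{S\backslash a}(\phi)$ is identified with its cokernel via the map induced by $(-1)^{|S\backslash a|}\phi^{S,S\backslash a}_{l}$ in the proofs of Theorems 3.4, 3.5, and 5.45. No topology is invoked; the exactness is already available from Tchernev's linear-algebraic construction.

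Your plan — transporting exactness from the long exact sequence of $\ol{BC}(\bfM^{*})$ through the isomorphism of Theorem \ref{C:canonicalIso} — is non-circular (Theorem \ref{C:canonicalIso} rests on Theorem \ref{T:MultiplicityBasis}, which uses a short exact sequence of \emph{multiplicity} spaces from \cite{T}, not the T-space sequence at issue), and your degree identifications are correct, using that $a$ can be neither a loop nor a coloop when both $S$ and $S\backslash a$ are T-flats. But the step you set aside as "sign bookkeeping" is not a finishing detail; it is the entire mathematical content, and it is precisely what the paper proves as its culminating result, Theorem \ref{T:CommDiagram}. Showing that $\theta\circ\epsilon\circ\theta^{-1}$ and $\theta\circ\delta\circ\theta^{-1}$ agree, up to the stated signs, with $(\pi.S\backslash a)_{S\backslash a}^{\phi}$ and $\phi^{S,S\backslash a}_{l}$ requires unwinding Tchernev's Definitions 2.3.1, 2.3.2, and 7.3, tracing basis elements through compositions of comultiplication, coalgebra/contraction maps, wedge maps, and $\phi$ acting across symmetric, exterior, and dual-exterior factors. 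That computation is carried out in the proof of Theorem \ref{T:CommDiagram} and is not short. So while the logical order could in principle be reversed (commutativity can be verified without presupposing exactness of the bottom row), your proposal merely asserts the conclusion of that verification without performing it, and therefore does not establish the lemma. The paper's citation-based proof is the far more economical path, which is why the commutativity argument is postponed to Theorem \ref{T:CommDiagram} rather than made a prerequisite for Lemma \ref{L:ExactT-spaces}.
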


\begin{proof}
Although it was not stated explicitly, this was shown in \cite{T}.  The injective map is one component of the 
injective complexes of Theorem 7.7.  The vector space $T_{S\backslash a}(\phi)$ being the cokernal of $(-1)^{l }(\pi .S\backslash a)_{S\backslash a}^{\phi}$, denoted by 
$C_{l}$, 
is shown within the proof of Theorem 3.4, 3.5 and 5.45 in Section 9.  The author shows that the map 
$\gamma ^S_{l}: C_{l}\rightarrow T_{S\backslash a}(\phi)$ induced from $(-1)^{|S\backslash a|}\phi_n^{S,S\backslash a}
: T_S(\phi)\rightarrow T_{S\backslash a}(\phi)$ is an isomorphism.
\end{proof}

\begin{theorem}\label{T:CommDiagram}
Let $\bfM$ be an ordered matroid on $S$ with representation $\phi$ and $l=l(S)$.  If $a=\max(S)$ and $S\backslash a$ is a T-flat of $\bfM$, then the following diagram commutes,
\[
\begin{CD}
0 \rightarrow \widetilde{H}_{l-1}(\ol{BC}(\bfM^*\backslash a);\Bbbk)\stackrel{\tilde{\epsilon}}{\rightarrow}
@.
\widetilde{H}_{l-1}(\ol{BC}(\bfM^*);\Bbbk)\stackrel{\delta}{\rightarrow}
@.
\widetilde{H}_{l-2} (\ol{BC}(\bfM^*/a);\Bbbk) \rightarrow 0 
\\ 
@V \theta/a VV           
@V \theta VV   
@V \theta \backslash a VV  
\\ 
0\rightarrow T_{S\backslash a}(\phi/a)\lra
@.
T_{S}(\phi)\lra
@.
T_{S\backslash a}(\phi\backslash a)\rightarrow 0,
\end{CD}
\]
where the maps $\tilde{\epsilon}=(-1)^l \epsilon$ and $\delta$ are from Lemma \ref{L:HomologyMaps} and the bottom row is from Lemma \ref{L:ExactT-spaces}.
\end{theorem}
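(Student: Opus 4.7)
The plan is to verify both squares on the canonical basis $\{\overline{\sigma}_B\}_{B \in \beta\mbox{nbc}(\bfM^*)}$ of the top middle space, using the explicit formulas for $\epsilon$ and $\delta$ from Lemma~\ref{L:HomologyMaps}, the definition of $\theta$ from Lemma~\ref{T:canonicalIso}, and the fact that the bottom row comes, via Lemma~\ref{L:ExactT-spaces}, from Tchernev's short exact sequence of multiplicity spaces
\[
0 \to S_{S\backslash a}(\phi) \otimes V_a(\phi) \xrightarrow{\nu} S_S(\phi) \xrightarrow{\pi_{S/a}} S_{S\backslash a}(\phi.S\backslash a) \to 0
\]
acting on the multiplicity factor, with the exterior factors $e_S$ and $v_{S\backslash B}^{*}$ transported passively. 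The content of the theorem is therefore that the chain-product basis $\{x_B\}$ of Theorem~\ref{T:MultiplicityBasis} is compatible with $\nu$ and $\pi_{S/a}$ in exactly the same way that the topological maps $\epsilon$ and $\delta$ are compatible with $\{\overline{\sigma}_B\}$ under the bijection of Proposition~\ref{P:decreasingchains}.

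For the left square I would take $B \in \beta\mbox{nbc}((\bfM/a)^*) = \beta\mbox{nbc}(\bfM^*\backslash a)$. By Proposition~\ref{P:decreasingchains} applied to $\bfM^*$, the decreasing chain of T-flats of $\bfM$ corresponding to $B$ passes through $S\backslash a$ and the final cover $S\backslash a \lessdot S$ carries the label $a$; consequently the recipe of Definition~\ref{T:posDim} yields $x_B = x_B' \cdot \phi(e_a) \in S_S(\phi)$, where $x_B'$ is the chain-product computed from the shorter chain in $\bfM/a$. This is precisely $\nu(x_B' \otimes \phi(e_a))$, so the multiplicity factor of $\theta/a(\overline{\sigma}_B)$ is carried by $\nu$ to the multiplicity factor of $\theta(\overline{\sigma}_B)$. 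Since $\epsilon$ is induced from an inclusion of reduced chain complexes, the combinatorial side of the square commutes on basis elements, and the reindexing of the exterior factors $e_{S\backslash a} \mapsto e_S$ and $v_{(S\backslash a)\backslash B}^{*} \mapsto v_{S\backslash B}^{*}$ produces a sign $(-1)^{l}$ that is absorbed by the $(-1)^{l}$ appearing in both $\tilde\epsilon$ and the bottom-left map.

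For the right square I would split on whether $a \in B$. If $a \notin B$, the chain for $B$ still ends with the cover $S\backslash a \lessdot S$ labeled by $a$, so $x_B$ lies in $\phi(e_a) \cdot S_{S\backslash a}(\phi) = \im(\nu) = \Ker(\pi_{S/a})$, and the bottom path vanishes, matching $\delta(\overline{\sigma}_B) = 0$ from Lemma~\ref{L:HomologyMaps}. If $a \in B$, the chain for $B$ does not pass through $S\backslash a$, so $\pi_{S/a}(x_B) = x_{B\backslash a}$ in $S_{S\backslash a}(\phi.S\backslash a)$, matching $\theta\backslash a(\delta(\overline{\sigma}_B)) = \theta\backslash a(\overline{\sigma}_{B\backslash a})$. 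The main obstacle is the careful sign bookkeeping for the exterior factors as the index $a$ is shifted between $B$ and $S\backslash B$ and the top exterior generator is reindexed from $e_S$ to $e_{S\backslash a}$; the signs $(-1)^{l}$ and $(-1)^{|S\backslash a|}$ built into Lemma~\ref{L:ExactT-spaces} were placed precisely to cancel these reindexing signs, so once this short exterior-algebra calculation is carried out, the claimed commutativity follows from the multiplicity-space statements above and the chain bijection of Proposition~\ref{P:decreasingchains}.
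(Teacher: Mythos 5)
Your high-level plan (verify both squares on the canonical basis elements and reduce the bottom row to Tchernev's multiplicity-space exact sequence) is the right spirit, and it is close in outline to the paper's strategy of dualizing the diagram and computing on basis elements. But there are several concrete errors in the execution that break the argument as written.

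First, the direction of Proposition~\ref{P:decreasingchains} is reversed. The decreasing chains live in the lattice of T-flats of $\bfM$ (not $\bfM^*$), and those chains index $\beta\mbox{nbc}(\bfM^*)$ by Remark~\ref{R:DualZg}. Applying Proposition~\ref{P:decreasingchains} to $\bfM$: the chains of $\bfM$ passing through $S\backslash a$ correspond to $\beta\mbox{nbc}((\bfM\backslash a)^*)=\beta\mbox{nbc}(\bfM^*/a)$, and those not passing through $S\backslash a$ correspond to $\beta\mbox{nbc}((\bfM/a)^*)=\beta\mbox{nbc}(\bfM^*\backslash a)$. A chain passes through $S\backslash a$ exactly when $a$ is one of its labels, i.e.\ exactly when $a\in B$. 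You have this backwards in both squares: you take $B\in\beta\mbox{nbc}(\bfM^*\backslash a)$ and claim its chain passes through $S\backslash a$ with final label $a$, but $a\notin B$ for such $B$, so the chain does \emph{not} pass through $S\backslash a$ and $x_B$ carries no factor $\phi(e_a)$. Your right-square sentence even contains a self-contradiction: ``If $a\notin B$, the chain for $B$ still ends with the cover $S\backslash a\lessdot S$ labeled by $a$'' asserts $a$ is a label of the chain while $a\notin B$.

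Second, you have interchanged the roles of $\nu$ and $\pi_{S/a}$ in the bottom row. Because $T_A(\phi)=S_A(\phi)^*\otimes\twedge U_A\otimes\twedge V_A^*$ carries a \emph{dualized} multiplicity factor, the directions flip under dualization. The injective map $T_{S\backslash a}(\phi/a)\to T_S(\phi)$ acts on the dual multiplicity factor via $\pi_{S/a}^*:S_{S\backslash a}(\phi/a)^*\to S_S(\phi)^*$ (not $\nu$), and the surjection $T_S(\phi)\to T_{S\backslash a}(\phi\backslash a)$ acts via the dual of multiplication by $\phi(e_a)$, i.e.\ by $\nu^*$ (not $\pi_{S/a}$). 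Note also the target of $\pi_{S/a}$ is $S_{S\backslash a}(\phi.S\backslash a)=S_{S\backslash a}(\phi/a)$, whereas the multiplicity factor on the right-hand T-space lives in $S_{S\backslash a}(\phi\backslash a)^*$, so your stated equality $\pi_{S/a}(x_B)=x_{B\backslash a}$ lands in the wrong space even before the sign analysis.

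Third, describing the exterior factors as ``transported passively'' conceals the main technical content. The maps $\phi^{S,S\backslash a}_l$ and $(\pi.S\backslash a)^\phi_{S\backslash a}$ from Tchernev's construction are compositions involving a diagonal on the multiplicity factor, a differential-like operator on $\twedge U$, contraction/coproduct operations, and reshuffling maps; it is not given that the net effect on a basis element is multiplication on the multiplicity factor plus passive reindexing of the exterior factors with a controllable sign. The paper devotes two explicit diagram chases (one per bottom map) to establishing precisely this, and those chases are where the signs $(-1)^{|S\backslash a|}$ and $(-1)^l$ are actually generated. That computation is not a minor ``exterior-algebra calculation'' to be deferred; it is the substance of the proof, and your proposal does not carry it out.
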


\begin{proof}
Notice that the top row of the diagram is not the sequence induced from the reduced chains of the reduced broken circuit complex, but since it is only a sign change, this sequence is also exact.
In order to show that our diagram is commutative, it is enough to show that the dual diagram 
\[
\begin{CD}
0 \rightarrow T_{S\backslash a}(\phi \backslash a)^* \rightarrow
@.
T_{S}(\phi)^*  \rightarrow
@.
T_{S\backslash a}(\phi/a)^* \rightarrow 0 
\\ 
@V  VV           
@V  VV   
@V  VV  
\\ 
0 \rightarrow \widetilde{H}_{l-2}(\ol{BC}(\bfM^*/ a) ;\Bbbk)^*\stackrel{\delta ^*}{\rightarrow}
@.
\widetilde{H}_{l-1}(\ol{BC}(\bfM^*);\Bbbk) ^*\stackrel{\tilde{\epsilon}^*}{\rightarrow}
@.
\widetilde{H}_{l-1} (\ol{BC}(\bfM^*\backslash a);\Bbbk)^* \rightarrow 0 
\end{CD}
\]
is commutative.  We will do this by investigating the images of the basis elements indexed by $\beta \mbox{nbc}(\bfM^*)$ under each map.  

Let $\{\ol{\sigma}_B^* |B\in \beta \mbox{nbc}(\bfM^*)\}$ in $\widetilde{H}_{l-1}(\ol{BC}(\bfM^*);\Bbbk)^*$ be the basis dual to $\{\ol{\sigma}_B |B\in \beta \mbox{nbc}(\bfM^*)\}$.  By taking duals, Lemma \ref{L:HomologyMaps} shows that 
$$\delta^*(\ol{\sigma}_B^*)=\ol{\sigma}_{B\cup a}^*$$
and $$(-1)^l \epsilon ^*(\ol{\sigma}_B^*)=\left\{ \begin{array}{ll} (-1)^l \ol{\sigma}_B^* & \mbox{ if } a\notin B\\
0 & \mbox{ otherwise.}\end{array}\right.$$
Also, Theorem \ref{T:canonicalIso} gives 
$$\theta ^*(x_B\otimes e_S^*\otimes v_{S\backslash B})=\ol{\sigma}_B^*,$$  
$$(\theta\backslash a) ^*(x_B\otimes e_{S\backslash a}^*\otimes v_{(S\backslash a)\backslash B})=\ol{\sigma}_B^*,$$
and 
$$(\theta/a) ^*(x_B\otimes e_{S\backslash a}^*\otimes \ol{v}_{(S\backslash a)\backslash B})=\ol{\sigma}_B^*.$$ 
Therefore, it remains to show that $$(-1)^{|S|-1}(\phi^{S, S\backslash a}_l)^*(x_B\otimes e_{S\backslash a}^* \otimes v_{(S\backslash a)\backslash B})=x_{B\cup a} \otimes e_S^* \otimes v_{S\backslash (B\cup a)}$$ 
and 
$$(-1)^l(\pi_{S\backslash a}^{\phi})_{S\backslash a}^*(x_B\otimes e_S^* \otimes v_{S\backslash B})=\left\{ \begin{array}{ll} (-1)^l x_B\otimes e_{S\backslash a}^*\otimes \ol{v}_{(S\backslash a)\backslash B} & \mbox{ if } a\notin B\\ 0& \mbox{ otherwise.} \end{array} \right.$$
Since $\bfM$ is connected, $V_S=V_{S\backslash a}$ and thus in \cite[Definition 2.3.1]{T} we have that $K_{S, S\backslash a}=0$.  Therefore, the map $\phi^{S, S\backslash a}_l$ from \cite[Definition 2.3.2]{T} can be rewritten as the composition
\[
\begin{CD}
T_S=S_S^*\otimes \twedge U_S\twedge V_S^*
\\
@VV \Delta \otimes \bfd \otimes 1 V
\\
V_a^*\otimes S_{S\backslash a}^* \otimes U_a \otimes \twedge U_{S\backslash a} \otimes \twedge V_{S\backslash a}^*
\\
@VV \tau V
\\
V_a^* \otimes U_a \otimes S_{S\backslash a}^* \otimes \twedge U_{S\backslash a} \otimes \twedge V_{S\backslash a}^*
\\
@VV 1 \otimes \phi \otimes 1 V
\\
V_a^* \otimes V_a \otimes S_{S\backslash a}^* \otimes \twedge U_{S\backslash a} \otimes \twedge V_{S\backslash a}^*
\\
@VV \mu \otimes 1 V
\\
S_{S\backslash a}^* \otimes \twedge U_{S\backslash a} \otimes \twedge V_{S\backslash a}^*=T_{S\backslash a}\\
\end{CD}
\]
The next diagram gives the composition of the dual maps in the left sequence and the image of $x_B\otimes e^*_{S\backslash a} \otimes v_{S\backslash B}$ in the right sequence.
\[
\begin{CD}
S_{S\backslash a}\otimes \twedge U^*_{S\backslash a} \twedge V_{S\backslash a} @. \quad \quad x_B\otimes e^*_{S\backslash a} \otimes v_{S\backslash B}
\\
@VV \mu ^* \otimes 1 V 
@VVV
\\
V_a\otimes V_a^*\otimes  S_{S\backslash a} \otimes \twedge U^*_{S\backslash a} \otimes \twedge V_{S\backslash a} @. \quad \quad v_a \otimes v_a^* \otimes x_B\otimes e^*_{S\backslash a} \otimes v_{S\backslash B}
\\
@VV 1\otimes \phi ^* \otimes 1 V 
@VVV
\\
V_a \otimes U_a^* \otimes S_{S\backslash a} \otimes \twedge U^*_{S\backslash a} \otimes \twedge V_{S\backslash a} @. \quad \quad v_a\otimes e_a^* \otimes x_B\otimes e^*_{S\backslash a} \otimes v_{S\backslash B}
\\
@VV \tau ^* V 
@VVV
\\
V_a \otimes S_{S\backslash a} \otimes U_a^* \otimes \twedge U^*_{S\backslash a} \otimes \twedge V_{S\backslash a} @. \quad \quad v_a \otimes x_B\otimes e_a^* \otimes e^*_{S\backslash a} \otimes v_{S\backslash B}
\\
@VV \nu \otimes \wedge \otimes 1 V 
@VVV
\\
S_S \otimes \twedge U^*_S \otimes \twedge V_S @. \quad \quad (-1)^{|S\backslash a|} x_{B\cup a}\otimes e^*_S \otimes v_{S\backslash B}\\
\end{CD}
\]

Therefore, $$(-1)^{|S|-1}(\phi^{S, S\backslash a}_l)^*(x_B\otimes e_{S\backslash a}^* \otimes v_{(S\backslash a)\backslash B})=x_{B\cup a} \otimes e_S^* \otimes v_{S\backslash (B\cup a)},$$ so that the left square commutes.

The map $$(\pi.S\backslash a)_{S\backslash a}^\phi :T_{S\backslash a}(\phi/a)\to T_S(\phi)$$ in \cite[Definition 7.3]{T} is given by the composition of the sequence of maps

\[
\begin{CD}
T_{S\backslash a}(\phi/a)=S_{S\backslash a}(\phi/a)^*\otimes \twedge U_{S\backslash a}\twedge V_{S\backslash a}(\phi/a)^*
\\
@VV 1 \otimes \bfdd  V
\\
S_{S\backslash a}(\phi/a)^* \otimes \twedge U_{S\backslash a} \otimes \twedge V_{S\backslash a}(\phi/a)^* \otimes U_a \otimes V_a^*
\\
@VV \tau V
\\
S_{S\backslash a}(\phi/a)^*\otimes \twedge U_{S\backslash a} \otimes U_a \otimes \twedge V_{S\backslash a}(\phi/a)^* \otimes V_a^*
\\
@VV \pi^* \otimes \wedge \otimes \bfc V
\\
S_S(\phi)^* \otimes \twedge U_S\otimes \twedge V_S(\phi)^*=T_S(\phi).\\
\end{CD}
\]
Similarly, we obtain image of a basis element $x_B\otimes e_S^* \otimes v_{S\backslash B}$ by composition of the sequence of dual maps as follows.
\[
\begin{CD}
 x_B\otimes e^*_S \otimes v_{S\backslash B}
\\
@VV \pi \otimes \bfd \otimes \bfc ^* V 
\\
 \ol{x_B}\otimes e_{S\backslash a}^* \otimes e_a^* \otimes \ol{v}_{(S\backslash B)\backslash a} \otimes v_a
\\
@VV \tau ^* V 
\\
\ol{x_B}\otimes e_{S\backslash a}^* \otimes \ol{v}_{(S\backslash B)\backslash a}\otimes e_a^* \otimes v_a
\\
@VV 1\otimes \bfdd ^* V 
\\
\ol{x_B} \otimes e_{S\backslash a}^* \otimes \ol{v}_{(S\backslash a)\backslash B}\\
\end{CD}
\]

Therefore, $$(-1)^l(\pi_{S\backslash a}^{\phi})_{S\backslash a}^*(x_B\otimes e_S^* \otimes v_{S\backslash B})=\left\{ \begin{array}{ll} (-1)^l x_B\otimes e_{S\backslash a}^*\otimes v_{(S\backslash a)\backslash B} & \mbox{ if } a\notin B\\ 0& \mbox{ otherwise} \end{array} \right.$$ and the diagram commutes.
\end{proof}

\end{document}